\theoremstyle{plain} 
\newtheorem{lemma}[equation]{Lemma} 
\newtheorem{theorem}[equation]{Theorem} 
\newtheorem{corollary}[equation]{Corollary} 
\newtheorem{priorResults}{Theorem}
\theoremstyle{definition}
\theoremstyle{remark}
\numberwithin{equation}{section}
\title[Sparse Bounds for  Oscillatory Singular Integrals] {Sparse Bounds for Maximally Truncated \\ Oscillatory Singular Integrals} 
\author{Ben Krause}
\address{
Department of Mathematics
The University of British Columbia \\
1984 Mathematics Road
Vancouver, B.C.
Canada V6T 1Z2}
\email{benkrause@math.ubc.ca}
\thanks{Research supported in part by  an NSF Postdoctoral Research Fellowship.}
\author{Michael T. Lacey}   
\address{ School of Mathematics, Georgia Institute of Technology, Atlanta GA 3034, USA}
\email {lacey@math.gatech.edu}
\thanks{Research supported in part by grant  NSF-DMS-1600693.}
\begin{document}

\begin{abstract}
For polynomial $ P (x,y)$, and any Calder\'{o}n-Zygmund kernel, $K$, the operator below  satisfies a $ (1,r)$ sparse bound, for $ 1< r \leq 2$.  
\begin{equation}
\sup _{\epsilon >0} 
\Bigl\lvert 
\int_{|y| > \epsilon}  f (x-y) e ^{2 \pi i   P (x,y) } K(y) \; dy 
\Bigr\rvert
\end{equation}
The implied bound depends upon $ P (x,y)$ only through the degree of $ P$. 
We derive from this a range of weighted inequalities, including weak type inequalities on  $ L ^{1} (w)$, which are new, even in the unweighted case.  
The unweighted weak-type estimate, without maximal truncations,   is   due to Chanillo and Christ (1987).  
\end{abstract} 

	\maketitle

\section{Introduction}  

The Ricci  Stein \cites{MR822187,MR890662} theory of oscillatory singular integrals concern operators of the form 
\begin{equation*}
T_P f (x) = \int e( P  (x,y))K (y) f (x-y) \; dy , \qquad  e(t) := e^{2\pi i t}
\end{equation*}
where $ K (y)$ is a Calder\'on-Zygmund kernel on $ \mathbb R ^{n}$, and $ P \;:\; \mathbb R^n \times \mathbb R ^{n} \to \mathbb R$ is a polynomial of two variables.  
These operators are bounded on all $ L ^{p}$, with bounds that only depend upon the degree of the polynomial, the dimension, and the kernel $K$, 
an important point in the  motivations for this theory.

\begin{priorResults}\label{t:RS}  Under the assumptions above, there holds for a finite constant 
\begin{equation*}
\sup _{\textup{deg}(P)=d}
\lVert T_P  \;:\; L ^{p} \mapsto L ^{p}\rVert  <   \infty , \qquad 1<   p < \infty , 
\end{equation*}
\end{priorResults}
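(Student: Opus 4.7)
The plan follows Ricci and Stein: reduce the $L^p$ estimate to the $L^2$ case, then prove the $L^2$ bound by induction on $d=\deg(P)$. Since $|e(P(x,y))|=1$, the kernel $(x,y)\mapsto e(P(x,y))K(y)$ has the same size and smoothness as $K(y)$, so once the $L^2$ bound depends only on $d$, non-convolution Calder\'on--Zygmund theory upgrades it to $L^p$ for all $1<p<\infty$ with a constant of the same form. The base case $d=0$ is exactly the classical bound for $K$.

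For the inductive $L^2$ step, decompose
\begin{equation*}
 P(x,y) \;=\; Q(x) + R(y) + \sum_{\alpha\ne 0,\,\beta\ne 0} c_{\alpha,\beta}\,x^\alpha y^\beta .
\end{equation*}
The pure-$x$ part contributes a unimodular factor $e(Q(x))$ which may be discarded, and if no genuinely mixed monomial is present the phase is a function of $y$ alone and the operator is handled by classical singular integral bounds after a dyadic decomposition in $|y|$. Otherwise, pick a mixed monomial $c\,x^{\alpha_0}y^{\beta_0}$ of maximal total degree and rescale $x\mapsto\lambda x$, $y\mapsto\lambda y$ with $\lambda = |c|^{-1/(|\alpha_0|+|\beta_0|)}$; this normalizes $|c|=1$ without changing the operator norm and produces a natural scale at which to split the integration into $|y|\le 1$ and $|y|>1$.

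On $|y|\le 1$, Taylor expand the highest-degree terms in $y$ around dyadic frozen values and extract the resulting $x$-phases as $L^p$-isometric modulations; this leaves a polynomial of degree strictly less than $d$, to which the inductive hypothesis applies, with an error controlled by the $L^2$ estimate for a truncated CZ operator. On $|y|>1$, the kernel is bounded, so one computes $T_PT_P^*$ directly: its kernel
\begin{equation*}
 H(x,x') \;=\; \int e\bigl(P(x,y)-P(x',y)\bigr)\,K(y)\,\overline{K(y+x'-x)}\,dy
\end{equation*}
contains, for $x\ne x'$, the nontrivial polynomial-in-$y$ phase $c\bigl(x^{\alpha_0}-{x'}^{\alpha_0}\bigr)y^{\beta_0}+\cdots$, and van der Corput's lemma yields $|H(x,x')|\lesssim |x-x'|^{-\sigma}$ for some $\sigma=\sigma(d)>0$. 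Schur's test or Cotlar--Stein then bounds $T_PT_P^*$, hence $T_P$, on $L^2$ with a constant depending only on $d$.

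The main obstacle is this van der Corput / $TT^*$ step: one must guarantee, for every $x\ne x'$, the existence of a direction in $y$-space along which the difference phase has a top-order derivative quantitatively bounded below by a polynomial in $|x-x'|$, with an exponent depending only on $d$. This uniform non-degeneracy is precisely what forces the final constant to depend on $P$ only through its degree, and it is where the \emph{maximality} of the chosen mixed monomial enters decisively; the rest of the argument is bookkeeping tied to the dyadic decomposition and the induction.
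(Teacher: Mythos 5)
This is Theorem~A, a prior result of Ricci and Stein that the paper cites (\cite{MR822187}, \cite{MR890662}) but does not reprove, so there is no in-paper argument to compare against; I assess the proposal on its own. The overall shape you describe — induction on degree, normalization of a top mixed coefficient, dyadic splitting at $|y|\sim 1$, $T_PT_P^*$ with van der Corput — is indeed the Ricci--Stein template, but there is a genuine gap at the very first reduction, and a second soft spot in the inductive step.

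The claim that the kernel $(x,y)\mapsto e(P(x,y))K(y)$ ``has the same size and smoothness as $K(y)$, so non-convolution Calder\'on--Zygmund theory upgrades $L^2$ to $L^p$'' is false, and the failure of this step is exactly what makes oscillatory singular integrals a nontrivial theory. Writing $L(x,z)=e\bigl(P(x,x-z)\bigr)K(x-z)$, the size bound $|L(x,z)|\lesssim|x-z|^{-n}$ is fine, but differentiating in $z$ produces a factor $\nabla_yP(x,x-z)$, so $|\nabla_z L(x,z)|$ is not $\lesssim |x-z|^{-n-1}$; it grows polynomially in $x$ and $z$. The H\"ormander/H\"older regularity hypothesis of $T1$-type theorems thus fails, with a constant that blows up with $\|P\|$ and with the spatial scale, so one cannot pass from $L^2$ to $L^p$ by quoting non-convolution CZ theory. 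The genuine $L^p$ (and a fortiori $L^{1,\infty}$) argument requires, after rescaling so $\|P\|=1$, a localization of $x$ to a fixed cube $I_0$ — on which the truncated kernel \emph{is} CZ with bounds depending only on $d$ — followed by handling translates $m+I_0$ via the observation that $P(m+x,y)-P(x,y)$ has strictly lower $x$-degree. This iteration, absent from your sketch, is what the paper itself uses in Lemma~\ref{l:lastCase}. Separately, the step on $|y|\le 1$ (``Taylor expand around dyadic frozen values ... leaving a polynomial of degree strictly less than $d$'') is too vague to verify: freezing $y$ near $y_0$ lowers the degree in $y-y_0$, not the total degree, and it is not stated what quantity the induction runs on in the rescaled variables. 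Finally, two smaller points: the $T_PT_P^*$ kernel should read $H(x,x')=\int e\bigl(P(x,y)-P(x',\,y+x'-x)\bigr)K(y)\overline{K(y+x'-x)}\,dy$ (the argument of $P(x',\cdot)$ must be shifted), and for $|x-x'|\lesssim1$ van der Corput gives no gain, so a plain Schur test does not close; one needs Cotlar--Stein on the dyadic pieces, which is what Ricci--Stein actually do.
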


The $ L ^{1}$ theory is more delicate, with the dominant result being that of Chanillo and Christ  \cite{MR883667} proving that the operators $ T_P$ indeed map $ L ^1$ into weak $ L ^{1}$, with again the bound depending only on the degree of $ P$, the dimension, and the kernel $K$. Their argument does not address  maximal truncations. 

Our main result  proves sparse bounds for the maximal truncations 
\begin{equation}\label{e:monomial}
T _{P, \ast } f (x) = \sup _{\epsilon >0}
\Bigl\lvert 
\int _{\lvert  y\rvert > \epsilon  } f (x-y) e(P(x,y)) K(y) \; dy
\Bigr\rvert.  
\end{equation}
The bound that we prove implies the weak $L^1$ bounds for the maximal truncations, as well as quantitative bounds in $ A_p$, for $ 1\leq p < \infty $. 

 Call a collection of cubes $ \mathcal S$  in $ \mathbb R ^{n}$ \emph{sparse} if there 
are sets $ \{ E_S  \,:\, S\in \mathcal S\}$  
which are pairwise disjoint,   $E_S\subset  S$ and satisfy $ \lvert  E_S\rvert > \tfrac 14 \lvert  S\rvert  $ for all $ S\in \mathcal S$.
For any cube $ I$ and $ 1\leq r < \infty $, set $ \langle f \rangle_ {I,r} ^{r} = \lvert  I\rvert ^{-1} \int _{I} \lvert  f\rvert ^{r}\; dx  $.  Then the $ (r,s)$-sparse form $ \Lambda _{\mathcal S, r,s} = \Lambda _{r,s} $, indexed by the sparse collection $ \mathcal S$ is 
\begin{equation*}
\Lambda _{S, r, s} (f,g) = \sum_{I\in \mathcal S} \lvert  I\rvert \langle f  \rangle _{I,r} \langle g \rangle _{I,s}.  
\end{equation*}
Given a  sublinear operator $ T$, and $ 1\leq r, s  < \infty $, we set 
$ \lVert T \,:\, (r,s)\rVert$ to be the infimum over constants $ C$ so that for all  all bounded compactly supported functions $ f, g$, 
\begin{equation}\label{e:SF}
\lvert  \langle T f, g \rangle \rvert \leq C \sup  \Lambda _{r,s} (f,g), 
\end{equation}
where the supremum is over all sparse forms.  
It is essential that the sparse form be allowed to depend upon $ f $ and $ g$. But the point is that the sparse form itself varies over a class of operators with very nice properties.   

For singular integrals without oscillatory terms we have 

\begin{priorResults}\label{t:CZ} \cites{MR3521084,150105818}  Let $ K $ be a Calder\'on-Zygmund kernel on $ \mathbb R ^{n}$ as above. Then, the  operator  $ T f = \textup{p.v.} K \ast f (x)$  
satisfies $ \lVert T \,:\, (1,1)\rVert < \infty $.  
\end{priorResults}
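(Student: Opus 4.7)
The natural plan is a stopping-time/recursion argument, in the spirit of Lerner's sparse domination proof. Fix compactly supported bounded $f,g$ and a large dyadic cube $Q_0$ containing the supports of both. I want to build the sparse collection $\mathcal S$ by induction on scale: show that for each cube $Q$ in the construction, one can peel off a principal term bounded by $\langle f\rangle_{Q,1}\langle g\rangle_{Q,1}\lvert Q\rvert$ and reduce the analysis to a disjoint family of strictly smaller cubes whose parents are $Q$.

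The two essential ingredients are the weak-type $L^1$ bound for $T$ itself, which is classical, and a weak-type $L^1$ bound for a \emph{grand maximal truncation}
\begin{equation*}
M_T^\# f(x) = \sup_{Q\ni x}\operatorname*{ess\,sup}_{x',x''\in Q}\bigl\lvert T(f\mathbf 1_{\mathbb R^n\setminus 3Q})(x') - T(f\mathbf 1_{\mathbb R^n\setminus 3Q})(x'')\bigr\rvert,
\end{equation*}
where the inner supremum measures the oscillation of $Tf$ on $Q$ after removing the local piece of $f$. The Hörmander-type smoothness of the Calder\'on-Zygmund kernel $K$ gives pointwise control of this oscillation by $M f$ at a dilate of $Q$, which in turn yields the needed $L^1\to L^{1,\infty}$ bound. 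Combined with the Hardy--Littlewood maximal function, these three sublinear operators are the inputs to the stopping-time scheme.

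Given $Q$ in the construction, define the exceptional set
\begin{equation*}
E_Q = \{x\in Q : M(f\mathbf 1_{Q})(x) > C\langle f\rangle_{Q,1}\} \cup \{x\in Q : M_T^\#(f\mathbf 1_{Q})(x) > C\langle f\rangle_{Q,1}\} \cup \{x\in Q : M(g\mathbf 1_Q)(x) > C\langle g\rangle_{Q,1}\},
\end{equation*}
and choose $C$ large so that the weak-type bounds force $\lvert E_Q\rvert < \tfrac14\lvert Q\rvert$. Let $\{Q_j\}$ be the maximal dyadic subcubes of $Q$ covering $E_Q$; then $\lvert Q\setminus\bigcup_j Q_j\rvert \geq \tfrac34\lvert Q\rvert$, which furnishes the disjoint major subsets required in the sparseness definition. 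Split
\begin{equation*}
\langle Tf,g\rangle_Q = \Bigl\langle T\bigl(f\mathbf 1_{Q\setminus\bigcup Q_j}\bigr),g\mathbf 1_{Q\setminus\bigcup Q_j}\Bigr\rangle + \sum_j \langle T(f\mathbf 1_{Q}),g\mathbf 1_{Q_j}\rangle + \text{cross term},
\end{equation*}
and use the stopping bounds to dominate the first two terms by a constant multiple of $\langle f\rangle_{Q,1}\langle g\rangle_{Q,1}\lvert Q\rvert$, leaving the recursive contribution $\sum_j \langle T(f\mathbf 1_{Q_j}),g\mathbf 1_{Q_j}\rangle$ to be handled at the next stage. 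Iterating produces a sparse $\mathcal S$ with $\lvert\langle Tf,g\rangle\rvert\lesssim \Lambda_{\mathcal S,1,1}(f,g)$.

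The main obstacle is the $L^1\to L^{1,\infty}$ bound for $M_T^\#$, which is what allows the stopping thresholds to be taken at the $L^1$ average of $f$ rather than some larger $L^r$ average. Its proof rests on the pointwise identity comparing the oscillation of $T(f\mathbf 1_{\mathbb R^n\setminus 3Q})$ on $Q$ with a kernel-difference average; everything else in the argument is bookkeeping once this estimate is in hand.
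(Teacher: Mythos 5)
This is a cited prior result, so the paper contains no proof of it; the citations are to Conde-Alonso--Rey \cite{MR3521084} and to Lacey \cite{150105818}. Your sketch is essentially correct, but it follows a third route, the Lerner--Ombrosi argument built around the grand maximal truncation operator, rather than either of the cited ones. Conde-Alonso and Rey obtain a \emph{pointwise} sparse domination by running a Calder\'on-Zygmund decomposition at the level of the operator and comparing to a positive dyadic shift. Lacey's elementary proof stops only on the averages $\langle f\rangle_Q$ and $\langle g\rangle_Q$ and then establishes the needed local ``$L^1$-improving'' estimate on the stopped cube using the weak $(1,1)$ bounds for $T$ and for its maximal truncations $T_\ast$. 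That second route is, not coincidentally, the template this paper actually follows in the oscillatory setting: the proof of Lemma~\ref{l:cases} stops only on $f$ and $g$ averages, defers the hard part to the $L^1$-improving Lemma~\ref{l:2ndMain}, and proves the latter by a Calder\'on-Zygmund decomposition. Your $M_T^\#$-based stopping is a perfectly valid alternative for the non-oscillatory Theorem~\ref{t:CZ}, but it does not transfer to the oscillatory case, where one cannot hope for a pointwise Hörmander-type control of the oscillation --- this is precisely why the paper is forced into the more delicate route through Lemma~\ref{l:2ndMain} and the fiberwise sublevel-set estimate \eqref{e:Z}.

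One technical imprecision in your write-up: ``the maximal dyadic subcubes of $Q$ covering $E_Q$'' is not quite the right selection. If one takes maximal dyadic cubes contained in $E_Q$ one need not cover $E_Q$, and a cover by cubes meeting $E_Q$ need not have small total measure. The standard fix is to run a Calder\'on-Zygmund stopping-time on $\mathbf 1_{E_Q}$ at a small height, producing pairwise disjoint dyadic $\{Q_j\}$ with $|E_Q\setminus \bigcup_j Q_j|=0$ and $\sum_j |Q_j|\leq \tfrac12|Q|$ simultaneously; that is what yields both the sparseness of $\{E_S\}$ and the small measure needed for the recursion. With that repair the argument closes.
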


Below, we obtain a quantitative version of a conjecture from  \cite{160906364}.

\begin{theorem}\label{t:Tsparse} For all integers $ d$,  and $ 1< r < 2$,  there holds 
\begin{equation}\label{e:Tsparse}
\sup _{\textup{deg} (P) \leq d} \lVert T _{P, \ast } \;:\; (1,r)\rVert \lesssim \tfrac 1 {r-1} . 
\end{equation}
The implied constant depends upon degree $ d$, dimension $ n$, and the kernel $ K$, but is otherwise absolute.  
\end{theorem}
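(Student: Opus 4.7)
My plan is to prove Theorem~\ref{t:Tsparse} via the recursive stopping-time framework for sparse domination, reducing to a single-scale estimate supplied by the oscillatory-integral machinery behind Theorem~\ref{t:RS} and the Chanillo--Christ weak $(1,1)$ bound.

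Given bounded compactly supported $f,g$, I fix a cube $Q_{0}$ containing both supports and build a sparse collection $\mathcal S \ni Q_{0}$ recursively: at each $Q \in \mathcal S$, select as children $\mathcal E(Q)$ the maximal subcubes $Q' \subsetneq Q$ where $\langle \lvert f \rvert \rangle_{Q',1} > K \langle \lvert f \rvert \rangle_{Q,1}$ or $\langle \lvert g \rvert \rangle_{Q',r} > K \langle \lvert g \rvert \rangle_{Q,r}$. The standard iteration then reduces the theorem to the single-scale estimate
\[
\Bigl\lvert \int_Q T_{P,*}(f\mathbf 1_Q)\,g - \sum_{Q' \in \mathcal E(Q)} \int_{Q'} T_{P,*}(f\mathbf 1_{Q'})\,g \Bigr\rvert \lesssim \frac{1}{r-1}\, \lvert Q \rvert\, \langle f\rangle_{Q,1}\,\langle g\rangle_{Q,r},
\]
together with a standard off-diagonal bound for the cross terms, supplied by the kernel decay of $K$. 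For the non-maximal operator $T_P$ this is routine: write $f\mathbf 1_Q = f_0 + f_{\mathrm{bad}}$ with $f_{\mathrm{bad}}$ supported on $\bigcup \mathcal E(Q)$; the stopping condition forces $\lvert f_0 \rvert \lesssim \langle f\rangle_{Q,1}$ a.e., and Chanillo--Christ's weak $(1,1)$ bound combined with Theorem~\ref{t:RS} interpolates via Marcinkiewicz to give $T_P : L^{r'} \to L^{r'}$ with norm $\lesssim 1/(r-1)$ for $1 < r < 2$, so that H\"older closes the argument; the $f_{\mathrm{bad}}$ contribution is absorbed into the children terms.

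The main novelty is handling the maximal truncation. I would smoothly decompose $K = \sum_{k \in \mathbb Z} K_k$ at dyadic scales $\lvert y \rvert \sim 2^k$; the $\varepsilon$-truncation then equals $\sum_{k > \log_2 \varepsilon} T_{P,k} f$ up to controlled error, yielding the pointwise bound
\[
T_{P,*} f(x) \lesssim \lvert T_P f(x) \rvert + \Bigl( \sum_{k \in \mathbb Z} \lvert T_{P, k} f(x) \rvert^2 \Bigr)^{1/2}.
\]
The first term is already handled; for the square function, almost-orthogonality of the $T_{P,k}$ via $TT^*$ and van der Corput produces an $L^2$ bound uniform in $\textup{deg}(P) \leq d$, and combined with a weak $(1,1)$ bound in the spirit of Chanillo--Christ, Marcinkiewicz interpolation yields the required $L^{r'}$ bound with constant $1/(r-1)$ needed to close the single-scale estimate.

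The principal obstacle is establishing the weak $(1,1)$ inequality for the square function: the original Chanillo--Christ argument addresses only the scalar $T_P$, not the coordinatewise $\ell^2$ sum. Resolving this will likely require an induction on $\textup{deg}(P)$: on each Calder\'on--Zygmund cube $Q$, Taylor-expand $P(x, y) = P(x_Q, y) + \tilde P(x, y)$ so that after rescaling $\tilde P$ has strictly lower degree in $x$ (handled by the inductive hypothesis), while the principal term $P(x_Q, y)$ yields a modulated Calder\'on--Zygmund square function controllable by the techniques underlying Theorem~\ref{t:CZ}. Keeping constants uniform through the induction while tracking the factor $1/(r-1)$ is the most delicate accounting.
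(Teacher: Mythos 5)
Your proposal has a significant structural gap: the claimed pointwise bound
\begin{equation*}
T_{P,*} f(x) \lesssim \lvert T_P f(x)\rvert + \Bigl(\sum_k \lvert T_{P,k} f(x)\rvert^2\Bigr)^{1/2}
\end{equation*}
is false. Writing the $\epsilon$-truncation as $T_P f - \sum_{k < \log_2 \epsilon} T_{P,k} f$, the supremum over $\epsilon$ asks for control of the maximal \emph{partial sum} $\sup_N \lvert \sum_{k<N} T_{P,k}f\rvert$, and there is no pointwise domination of maximal partial sums by the $\ell^2$ square function; the best general statement is the Rademacher--Menshov inequality, which costs a factor $\log N$ in $L^2$, not a pointwise $\ell^2$ bound. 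This is not a cosmetic issue: the whole difficulty of the maximal truncation is that the $\log N$ loss must be paid, and the paper is organized around how to absorb it. Concretely, the paper proves a Carleson-measure estimate (the inequality \eqref{e:CM}) that caps the number of active scales at $\approx 2^t$ on the relevant cubes, so the $\log$ loss from Rademacher--Menshov is only $\approx t$, and that is beaten by the gain $2^{-t/3}$ coming from the non-standard-collection estimate. Without that mechanism your argument would need an unquantified or incorrect maximal inequality.

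A second gap is the "single-scale estimate" reduction together with the statement that the $f_{\textup{bad}}$ contribution "is absorbed into the children terms." In the framework of Conde--Culiuc--Di Plinio--Ou that this paper follows, one does not reduce to a one-scale estimate; one reduces to a genuine multi-scale, $L^1$-improving partial sparse bound (Lemma \ref{l:2ndMain}), where the entire content is the interaction of $T_{K}$ with the bad part $b$ across all scales $s$. The bad function cannot simply be pushed to the next generation because $T_K b$ spreads mass outside the bad cubes; analyzing this is where the oscillatory $T^*T$ estimate (Lemma \ref{l:same0}), and in particular the \emph{fiber-wise} neighborhood bound \eqref{e:Z} on the exceptional set $Z_K$, is indispensable. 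You correctly flag the weak $(1,1)$ for the square function as the principal obstacle, and you are right to suspect it is where Chanillo--Christ must be sharpened; the new ingredient the paper extracts is precisely \eqref{e:Z}, which is stronger than the mere measure estimate implicit in Chanillo--Christ and is what makes the fiber-wise counting in the proof of Lemma \ref{l:2} go through. Your suggestion to induct on the $x$-degree by Taylor expansion is essentially correct and does appear in the paper (Lemma \ref{l:lastCase}), but it is a comparatively small final step, not the engine of the proof.
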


As a corollary, we have these  quantitative weighted inequalities.  
The inequalities \eqref{e:wtd1} are new even for Lebesgue measure.  
The case of no truncations has been addressed in \cites{MR1782909,MR2900003}, but without effective bounds in terms of the $ A_p $ characteristic. 

\begin{corollary}\label{c:wtd} For every  $ d\geq 2$ and weight $ w\in A_1$ there holds 
\begin{gather}   \label{e:wtd1}
\sup _{\textup{deg} (P) \leq d} \lVert T _{P, \ast }   \,:\, L ^{1} (w) \mapsto L ^{1, \infty } (w)\rVert \leq  [w] _{A_1} ^{2} \log_+ [w] _{A_1}, 
\\  \label{e:wtd2}
\sup _{\textup{deg} (P) \leq d} \lVert  T _{P, \ast }   \,:\, L ^{p} (w) \mapsto L ^{p}  (w)\rVert \lesssim [w] _{A_p}  ^{1 + \max \{\frac 1 {p-1}, 1\}}, \qquad 1< p < \infty . 
\end{gather}
\end{corollary}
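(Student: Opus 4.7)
Both estimates follow from Theorem \ref{t:Tsparse} via the standard sparse-to-weighted dictionary, with the key feature being how the $\tfrac{1}{r-1}$ blow-up of the sparse constant must be optimized against the weight characteristic. I would handle the strong bound \eqref{e:wtd2} first, and then bootstrap to the weak endpoint \eqref{e:wtd1}.

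For \eqref{e:wtd2}, the plan is to pair the $(1,r)$-sparse form $\Lambda_{\mathcal S,1,r}(f,g)$ with weighted $L^p$–$L^{p'}$ duality and estimate it cube-by-cube. The factor $\langle g\rangle_{I,r}$ is controlled by an $L^r$-averaged maximal function $M_r g$, which maps $L^{p'}(w^{1-p'})$ to itself with constant $\lesssim [w]_{A_p}^{1/(p-1)}$ provided $r < p$; the factor $\langle f\rangle_{I,1}$ is controlled by the ordinary Hardy–Littlewood maximal function, with constant $\lesssim [w]_{A_p}^{1/(p-1)}$ as well. Combined with $\lVert T_{P,*} : (1,r)\rVert \lesssim 1/(r-1)$ from Theorem \ref{t:Tsparse}, one picks $r = 1 + \min\{1, p-1\}/2$, bounded away from both $1$ and $p$. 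The exponent $1 + \max\{1/(p-1),1\}$ emerges from two separate $A_p$ invocations, one for each of the two maximal functions.

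For the weak-type bound \eqref{e:wtd1}, sparse bounds are unavailable at $p = 1$, so a weighted Calder\'on-Zygmund (CZ) decomposition of $f$ is required. Decompose $f = g + b$ at height $\lambda$ with respect to $w$, and apply the strong $L^r(w)$ bound derived above to $g$ with the choice $r = 1 + 1/\log(e + [w]_{A_1})$. This value of $r$ is forced: it balances the sparse blow-up $1/(r-1) \sim \log[w]_{A_1}$ against the weighted $L^r$-norm loss, which behaves like $[w]_{A_1}^r$ as $r \to 1$, producing exactly the advertised $[w]_{A_1}^2 \log_+[w]_{A_1}$ factor after a final weighted Chebyshev step. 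The bad part $b$ is handled via the $A_1$ property together with the fact that the CZ cubes have total $w$-measure $\lesssim \lVert f\rVert_{L^1(w)}/\lambda$.

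The main obstacle is the bad-part estimate for the weak endpoint. Because $T_{P,*}$ is a maximal truncation of an \emph{oscillatory} singular integral, the usual cancellation of $b$ against a H\"older-regular kernel is unavailable---the phase $e(P(x,y))$ depends on $x$ and destroys the standard mean-zero argument. Instead, one must reapply the sparse bound of Theorem \ref{t:Tsparse} to $b$ outside the dilated CZ cubes, exploiting that the CZ cubes themselves form a sparse-compatible collection; the sparse structure substitutes for kernel regularity. Once this step is in place, the final exponents $[w]_{A_p}^{1 + \max\{1/(p-1),1\}}$ and $[w]_{A_1}^2 \log_+[w]_{A_1}$ follow from careful bookkeeping of the weighted maximal-function constants.
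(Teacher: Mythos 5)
The paper supplies no proof of this corollary; it is a direct appeal to the general sparse-to-weighted dictionary of Conde-Alonso, Culiuc, Di Plinio, and Ou (cited in the paper), whose appendix establishes exactly these quantitative weighted estimates for any sublinear operator with a $(1,r)$-sparse bound whose constant grows like $\tfrac1{r-1}$ as $r\to1$. Your outline has the right general shape --- domination by maximal functions for \eqref{e:wtd2}, a Calder\'on--Zygmund decomposition and a logarithmically chosen $r$ for \eqref{e:wtd1} --- but two of its central steps do not hold up.

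For \eqref{e:wtd2}: the claim that $M_r$ is bounded on $L^{p'}(w^{1-p'})$ with norm $\lesssim [w]_{A_p}^{1/(p-1)}$ ``provided $r<p$'' is not correct. Writing $\sigma=w^{1-p'}$, one has $\lVert M_r g\rVert_{L^{p'}(\sigma)}=\lVert M(|g|^r)\rVert_{L^{p'/r}(\sigma)}^{1/r}$, so the relevant restriction is $r<p'$; more importantly, boundedness requires $\sigma\in A_{p'/r}$, which for $r>1$ is a \emph{strictly smaller} class than $A_{p'}$. The hypothesis $w\in A_p$ only gives $\sigma\in A_{p'}$, and the passage to $A_{p'/r}$ uses the open-endedness of the $A_q$ classes, which is available only when $r-1$ is small depending on $[\sigma]_{A_\infty}$ (or on $[w]_{A_\infty}$ after transposing the roles of $f$ and $g$). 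So the fixed choice $r=1+\min\{1,p-1\}/2$ cannot work; $r$ must be taken weight-dependent for the strong bound as well, and it is this weight-dependent $r$ together with $[w]_{A_\infty}\le[w]_{A_p}$ that produces the extra power of $[w]_{A_p}$. Your bookkeeping --- two invocations each yielding $[w]_{A_p}^{1/(p-1)}$, hence $[w]_{A_p}^{2/(p-1)}$ --- also does not reproduce $[w]_{A_p}^{1+\max\{1/(p-1),1\}}$ when $p\neq2$.

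For \eqref{e:wtd1}: the balance $\tfrac1{r-1}\sim\log[w]_{A_1}$ is the right heuristic, but the bad-part estimate is the entire content of any weighted weak-type argument, and ``reapply the sparse bound to $b$ outside the dilated CZ cubes'' is not a step one can execute as written. The standard route (Domingo-Salazar--Lacey--Rey, then Conde-Alonso--Culiuc--Di Plinio--Ou) never touches the kernel of $T_{P,\ast}$ at this stage: one tests $\langle T_{P,\ast}f,\mathbf 1_G\rangle$ for a carefully chosen major subset $G$ of the superlevel set, invokes the already-proved sparse bound on that pairing, and then estimates the sparse form by a stopping-time decomposition of the sparse collection combined with the $A_1$ condition. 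Thus your concern that the phase $e(P(x,y))$ ``destroys the mean-zero argument'' is beside the point once the sparse bound is in hand; the genuine work is in estimating the sparse form against $\mathbf 1_G$ and $f$, and that argument is entirely missing from your sketch.
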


 Ricci-Stein theory has several interesting extensions.  
On the one hand, there are several  variants on the main result of \cite{MR1879821}, which considers an estimate which is uniform over the polynomials $ P$.  See \cites{MR2545246,11054504}.   One also has weighted extensions of the inequalities  
in for instance \cite{MR3291794}. 

Sparse bounds have recently been quite active research topic, impacting a range of operators. We point to the previously cited \cites{MR3521084,150105818}.  But also point to the range of operators addressed in \cites{160908701,2016arXiv161208881C,160506401,161103808,161203028,160305317}

\bigskip 
Our quantitative sparse bound in Theorem~\ref{t:Tsparse} closely matches the bounds obtained for `rough' singular integrals 
by Conde, Culiuc, Di Plinio and Ou \cite{2016arXiv161209201C}.  Their argument is a beautiful  abstraction of the methods of Christ  and others \cites{MR943929,MR951506,MR1317232}.  A large part of our argument can be seen as  an extension of \cite{2016arXiv161209201C}.

But, the oscillatory nature of the kernels present substantial  difficulties, and  additional new arguments are required to address maximal truncations.  Our prior paper \cite{160901564} proved the Theorem above in the special case of $ P (y) = y ^{d}$ in one dimension, and the interested reader will find that argument has fewer complications than this one.  
\begin{enumerate*}

\item  The essential oscillatory nature of the question is captured in Lemma~\ref{l:same0}. 
It has two estimates, the first \eqref{e:same} being well-known, having its origins in the work of Ricci and Stein \cite{MR822187}.
The second, \eqref{e:Z} is the additional feature needed to understand the $ L ^{1}$ endpoint. It is proved with the aid of arguments that can be found in the work of Christ and Chanillo \cite{MR883667}.  

\item The essential fact needed is the partial sparse bound of Lemma~\ref{l:2ndMain}.  This is a kind of `$L^1$ improving' estimate.    The initial steps in the proof of this Lemma depend upon a standard Calder\'on-Zygmund decomposition, with additional tweaks of the argument to adapt to the oscillatory estimates.

\item The further additional fact is Lemma~\ref{l:2}.  Crucially, a  the Carleson measure estimate 
is proved, which allows one to control the number of scales that impact this Lemma.  The maximal truncations are then controlled by  orthogonality considerations, and a general form of the Rademacher-Menshov theorem. 
\end{enumerate*}
 
We thank the referee for a careful reading.

\subsection{Notation}
As mentioned previously, here and throughout we use $e(t) := e^{2\pi i t}$; $M_{\textup{HL}}$ denotes the Hardy-Littlewood maximal function. For cubes $I \subset \mathbb{R}^n$, we let
$ \ell(I) := |I|^{1/n} $ denote its side-length.

With $d \geq 1$, we fix throughout the constant  
\begin{equation}\label{e:epsilon}
\epsilon_d := \frac{1}{2d}. 
\end{equation}

We use multi-index notation, $\alpha = (\alpha_1,\dots,\alpha_n)$, so
\[ x^\alpha := \prod_{i=1}^n x_i^{\alpha_i}.\]
 We use $\alpha > \beta$ to mean that $\alpha_i \geq \beta_i$ for each $1 \leq i \leq n$ with at least one inequality being strict.

\smallskip 
Recall that a Calder\'{o}n-Zygmund kernel $K$ on $ \mathbb R $, satisfies the following properties:
\begin{itemize}
\item $K$ is a tempered distribution which agrees with a $C^1$ function $K(x)$ for $x \neq 0$;
\item $\hat{K}$, the Fourier transform of $K$, is an $L^\infty$ function;
\item $|\partial^\alpha K(x)| \lesssim |x|^{-n - |\alpha|}$ for each multi-index $0 \leq |\alpha| \leq 1$. (We recall multi-index notation in the subsection on notation below.)
\end{itemize}
The key property of such kernels $K$ that we shall use is that we may decompose
\begin{equation}\label{decomp}
 K(x) = \sum_{j = -\infty}^\infty 2^{-nj} \psi_j(2^{-j}x), \ x \neq 0
\end{equation}
where $\psi_j$ are each $C^1$ functions supported in $\{ \frac{1}{4} < |x| \leq 1\}$, which satisfy 
\begin{equation}\label{psi} |\partial^\alpha \psi_j| \leq C \ \text{ for each multi-index } 0 \leq |\alpha| \leq 1 \text{ uniformly in $j$}
\end{equation}
and have zero mean,
$ \int \psi_j(x) = 0$. 
This decomposition is presented in \cite{MR1232192}*{Chap. 13}.  
\smallskip 

We will make use of the modified Vinogradov notation. We use $X \lesssim Y$, or $Y \gtrsim X$ to denote the estimate $X \leq CY$ for an absolute constant $C$. If we need $C$ to depend on a parameter,
we shall indicate this by subscripts, thus for instance $X \lesssim_p Y$ denotes
the estimate $X \leq C_p Y$ for some $C_p$ depending on $p$. We use $X \approx Y$ as
shorthand for $Y \lesssim X \lesssim Y $.

\section{Lemmas} \label{s:lemmas}

There are two categories of facts collected here,  (a) those which reflect the oscillatory nature of the problem, (b) a variant of the Rademacher-Menshov theorem. 

\subsection{Oscillatory Estimates}
This is a variant of the van der Corput lemma. 

\begin{lemma}\label{SW}\cite{MR1879821}*{Prop. 2.1.} 
Suppose $\Omega \subset \{ |x| \leq 1\}$ is a convex set, and 
$P(t) := \sum_{|\alpha| \leq d} \lambda_\alpha t^\alpha $ 
is a real polynomial, equipped with the coefficient norm, $\| P \| := \sum_{1 \leq |\alpha| \leq d} |\lambda_\alpha|$. Then for any $C^1$ function $\phi$, 
\begin{equation}\label{e:VDC}
\left| \int_{\Omega} e(P(t)) \phi(t) \ dt \right| \lesssim \| P\|^{-1/d}  \left( \sup_{|x| \leq 1} |\phi(x)| + \sup_{|x| \leq 1} |\nabla \phi(x)| \right).
\end{equation}
\end{lemma}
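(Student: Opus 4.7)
The plan is a standard van der Corput dichotomy --- trivial bound on sublevel sets of the gradient, integration by parts elsewhere --- driven by a sublevel-set estimate for polynomials with normalized coefficients. Rescaling, we may assume $\sup_{|x|\leq 1} (|\phi|+|\nabla\phi|) \leq 1$. Since $|\Omega| \leq C_n$, the trivial estimate $|\int_\Omega e(P)\phi| \leq |\Omega|$ handles $\|P\| \leq 1$, so we may assume $\lambda := \|P\| \geq 1$ and factor $P = \lambda Q$ with $\|Q\|=1$. The case $d=1$ is direct, since $\nabla Q$ is then a nonzero constant vector of length $\approx 1$, so a single integration by parts gives $\lesssim \lambda^{-1}$. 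So assume $d \geq 2$.

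The central ingredient is the sublevel-set bound
\[
\bigl|\{\,t \in \mathbb R^n : |t| \leq 1,\ |\nabla Q(t)| \leq \mu\,\}\bigr| \lesssim_d \mu^{1/(d-1)},
\]
valid for every polynomial $Q$ of degree $\leq d$ with coefficient norm $\|Q\|=1$. This follows from compactness together with the one-variable sublevel-set lemma: the collection of such $Q$ is a compact sphere in a finite-dimensional normed space; since the constant term is excluded from $\|Q\|$, some partial derivative $\partial_i Q$ is a nontrivial polynomial of degree $\leq d-1$, and restricting to generic lines and invoking the one-variable result produces the exponent $1/(d-1)$ with a constant taken uniform in $Q$ by compactness.

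With this in hand, split $\Omega = A \cup B$ where $A := \{t \in \Omega : |\nabla Q(t)| \leq \mu\}$; the bound over $A$ is simply $|A| \lesssim \mu^{1/(d-1)}$. On $B$ we use the identity
\[
e(\lambda Q) = \frac{1}{2\pi i \lambda\, |\nabla Q|^2}\, \nabla Q \cdot \nabla e(\lambda Q)
\]
and integrate by parts. Convexity of $\Omega$ (lines meet $\Omega$ in intervals) together with partitioning by which coordinate $i$ maximizes $|\partial_i Q|$ reduces this to one-dimensional integrations by parts on fibers; both volume and boundary contributions are then $\lesssim (\lambda \mu)^{-1}$. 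Optimizing $\mu := \lambda^{-(d-1)/d}$ makes the two pieces $\mu^{1/(d-1)}$ and $(\lambda \mu)^{-1}$ simultaneously equal to $\lambda^{-1/d}$, yielding the claim.

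The main obstacle is making the integration by parts on $B$ rigorous: the field $\nabla Q / |\nabla Q|^2$ is irregular near the level set $\{|\nabla Q|=\mu\}$, and $B$ has algebraic but potentially complicated boundary. The device used in \cite{MR1879821} is precisely the coordinate-slicing described above: on each slice, $\partial_i Q$ is a polynomial in a single variable of degree $\leq d-1$ with coefficient norm controlled below, so a one-variable van der Corput estimate applies, and the endpoint contributions can be absorbed into a sublevel-set term of the same exponent. The compactness-based uniformity in $Q$ is what ensures that all implicit constants depend only on $d$ and $n$.
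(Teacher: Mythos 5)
The paper does not prove Lemma~\ref{SW}; it is cited verbatim from Stein--Wainger \cite{MR1879821}*{Prop. 2.1}, so there is no in-paper proof to compare against. Judged on its own, your sketch follows the standard sublevel-set dichotomy that also underlies the Stein--Wainger argument, and the bookkeeping (optimizing $\mu = \lambda^{-(d-1)/d}$ so that $\mu^{1/(d-1)} = (\lambda\mu)^{-1} = \lambda^{-1/d}$) is correct.

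There is, however, a genuine gap in the justification of the key ingredient, the gradient sublevel-set bound $|\{|t|\le 1: |\nabla Q(t)|\le \mu\}| \lesssim_d \mu^{1/(d-1)}$ for $\|Q\|=1$. You appeal to ``compactness,'' but compactness of the unit sphere of coefficient vectors does not give uniformity here: one would have to pass to the limit along a sequence $(Q_j,\mu_j)$ where a priori $\mu_j \to 0$, and the sublevel-set measure is not continuous in $Q$ in a way that lets one extract a uniform constant from the $\mu \to 0$ endpoint. The correct route is quantitative, and is exactly what Lemma~\ref{l:level} (Stein--Wainger Prop.~2.2, also stated in the paper) provides: since $\|Q\|=1$, some coefficient $\lambda_{\alpha_0}$ with $|\alpha_0|\ge 1$ has $|\lambda_{\alpha_0}|\gtrsim_{n,d} 1$; pick $i$ with $(\alpha_0)_i\ge 1$ and use $|\nabla Q| \ge |\partial_i Q|$. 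If $|\alpha_0|\ge 2$ then $\partial_i Q$ has a non-constant coefficient of size $\gtrsim 1$, so Lemma~\ref{l:level} applied to $\partial_i Q$ (degree $\le d-1$) gives the bound directly. If $|\alpha_0|=1$, the large coefficient of $\partial_i Q$ is its constant term $c$; either the non-constant part $R := \partial_i Q - c$ has $\|R\| \gtrsim 1$, in which case Lemma~\ref{l:level} applies to $\partial_i Q$, or $\|R\|$ is so small that $|\partial_i Q| \ge |c| - C\|R\| \gtrsim 1 > \mu$ on the whole cube and the sublevel set is empty. Your sketch omits this case analysis and the resulting argument would not survive a careful referee.

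A secondary, lesser issue: the ``coordinate-slicing'' integration by parts on $B$ is asserted rather than carried out. The one-dimensional van der Corput lemma with a lower bound $|Q'| \gtrsim \mu$ does give $(\lambda\mu)^{-1}$, but only after partitioning the line into $O(d)$ intervals of monotonicity; since $B$ itself is defined by a polynomial inequality, each line meets it in $O(d)$ intervals, so the boundary terms are controlled. This step is standard but should be made explicit, as a naive multi-dimensional integration by parts of the vector field $\nabla Q/|\nabla Q|^2$ produces the worse bound $(\lambda\mu^2)^{-1}$.
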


Next, a sublevel set estimate.  
\begin{lemma}\label{l:level} \cite{MR1879821}*{Prop. 2.2} We have the estimate below. 
\begin{equation*}
\lvert   \{  |x| \leq 1 \;:\;  \lvert  P  (x)\rvert < \epsilon  \}\rvert \lesssim \epsilon ^{1/d} \lVert P\rVert ^{-1/d} . 
\end{equation*}

\end{lemma}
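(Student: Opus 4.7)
I would derive Lemma~\ref{l:level} from the van der Corput estimate Lemma~\ref{SW} by a Chebyshev/Fourier-duality argument. Fix once and for all a non-negative Schwartz function $\psi\colon \mathbb R\to [0,\infty)$ with $\psi(t)\geq 1$ for $\lvert t\rvert\leq 1$.  Writing $E=\{\lvert x\rvert\leq 1:\lvert P(x)\rvert<\epsilon\}$, the pointwise majorisation $\mathbf 1_E(x)\leq \psi(P(x)/\epsilon)$ combined with Fourier inversion $\psi(s)=\int\hat\psi(\xi)\,e(\xi s)\,d\xi$ and Fubini yields
\begin{equation*}
\lvert E\rvert \;\leq\; \int \hat\psi(\xi) \int_{\lvert x\rvert\leq 1} e\bigl(\tfrac{\xi}{\epsilon}P(x)\bigr)\,dx\,d\xi.
\end{equation*}

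For each fixed $\xi$, the inner oscillatory integral is governed by Lemma~\ref{SW} applied to the polynomial $\tfrac{\xi}{\epsilon}P$, whose coefficient norm equals $\lvert\xi\rvert\lVert P\rVert/\epsilon$. Together with the trivial volume bound, this gives
\begin{equation*}
\Bigl\lvert\int_{\lvert x\rvert\leq 1} e\bigl(\tfrac{\xi}{\epsilon}P(x)\bigr)\,dx\Bigr\rvert \;\lesssim\; \min\bigl(1,\;(\lvert\xi\rvert\lVert P\rVert/\epsilon)^{-1/d}\bigr).
\end{equation*}
Next I would split the $\xi$-integration at the threshold $\lvert\xi\rvert=\epsilon/\lVert P\rVert$. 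The low-frequency piece contributes at most $\lVert\hat\psi\rVert_\infty\cdot \epsilon/\lVert P\rVert$, while the high-frequency piece contributes $(\epsilon/\lVert P\rVert)^{1/d}\cdot \int_{\lvert\xi\rvert\geq\epsilon/\lVert P\rVert}\lvert\hat\psi(\xi)\rvert\lvert\xi\rvert^{-1/d}\,d\xi$, the last integral being uniformly bounded thanks to the Schwartz decay of $\hat\psi$. One may freely assume $\epsilon\leq\lVert P\rVert$ (otherwise the target estimate is immediate from the trivial bound $\lvert E\rvert\leq C_n$), so $\epsilon/\lVert P\rVert\leq (\epsilon/\lVert P\rVert)^{1/d}$, and summing the two ranges gives the claimed bound.

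The main point is bookkeeping: arranging that the exponent produced by van der Corput matches the exponent of $\epsilon$ in the target, and reducing to the regime $\epsilon\leq \lVert P\rVert$ where the low-frequency contribution is dominated by the high-frequency one. No genuinely new oscillatory content beyond Lemma~\ref{SW} is required, which is why the Stein--Wainger reference bundles this sublevel set estimate as the immediate companion of their van der Corput lemma.
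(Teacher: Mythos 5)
The paper does not prove Lemma~\ref{l:level}; it simply cites Stein--Wainger Prop.~2.2 and moves on, so there is no in-paper argument to compare yours against. Your proposal supplies a proof by the standard duality route: majorise $\mathbf 1_E(x)$ by $\psi(P(x)/\epsilon)$ for a smooth $\psi\geq \mathbf 1_{[-1,1]}$, expand $\psi$ by Fourier inversion, and then apply the van der Corput estimate of Lemma~\ref{SW} to the inner oscillatory integral, splitting the $\xi$-integration at $|\xi|=\epsilon/\|P\|$. This is a correct and clean way to deduce the sublevel bound from the oscillatory one, and it is a legitimate substitute for the (uncited) argument in Stein--Wainger, which as I recall proceeds by a more hands-on induction on degree rather than by Fourier duality.

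One point you should tighten. You assert that $\int_{|\xi|\geq\epsilon/\|P\|}|\hat\psi(\xi)|\,|\xi|^{-1/d}\,d\xi$ is ``uniformly bounded thanks to the Schwartz decay of $\hat\psi$.'' The Schwartz decay only controls large $|\xi|$; uniform boundedness near the origin requires that $|\xi|^{-1/d}$ be locally integrable, i.e.\ $1/d<1$, so $d\geq 2$. For $d=1$ the integral over $|\xi|\geq\epsilon/\|P\|$ grows like $\log(\|P\|/\epsilon)$, and your argument would only yield $|E|\lesssim(\epsilon/\|P\|)\log(\|P\|/\epsilon)$ rather than $\epsilon/\|P\|$. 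This is easily repaired: for $d=1$ the set $\{|P(x)|<\epsilon\}$ is a slab of width $\lesssim\epsilon/\|P\|$, so the claim is immediate without any oscillatory input; alternatively, note that the paper only ever invokes this lemma in the regime $d\geq 2$. With that caveat recorded, the proof is complete.

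Two smaller cosmetic remarks: the inequality $\lvert E\rvert\leq\int\hat\psi(\xi)\int_{|x|\leq 1}e(\xi P(x)/\epsilon)\,dx\,d\xi$ should carry absolute values on the right-hand side before you estimate term by term, since the iterated integral is complex; and when you apply Lemma~\ref{SW} you should note that the coefficient norm there omits the constant term of $P$, which is consistent with how $\|P\|$ is used in Lemma~\ref{l:level}, and that the constant term of $P$ is irrelevant anyway since $e(\xi\lambda_0/\epsilon)$ factors out with modulus one. Neither affects the conclusion.
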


Using a simple change of variables  we have for any cube   $I$, any convex set $\Omega \subset I$, and any $C^1$ function $ \varphi $, 
\begin{gather}\label{e:vdc}
\Bigl\lvert  \int _{\Omega } e (P  (x)) \varphi (x) \; dx  \Bigr\rvert 
\lesssim   \lvert  I\rvert ( \sup_{x \in I} |\varphi(x) | +  \ell(I) \sup_{x \in I} | \nabla \varphi(x) | )
\Bigl[ \sum_{\alpha } \ell(I) ^{\lvert  \alpha \rvert } \lvert  \lambda _{\alpha }\rvert  \Bigr] ^{-1/d} ,  
\\ \label{e:level} 
\lvert   \{  x \in I \;:\;  \lvert  P  (x)\rvert < \epsilon  \}\rvert \lesssim  \lvert  I\rvert \epsilon ^{1/d}
\Bigl[ \sum_{\alpha } \ell(I) ^{\lvert  \alpha \rvert } \lvert  \lambda _{\alpha }\rvert  \Bigr] ^{-1/d} .  
\end{gather}
Here, we let $\ell(I)$ denote the side length of the cube $ I$.

We will be concerned with operators that  have kernels 
\begin{equation}\label{e:phi}
 \phi _k (x,y) = 2^{-nk} e ( P (x,y) ) \psi _k (2^{-k} y), \qquad k \in \mathbb Z . 
\end{equation}
Above, the $ \psi _k  $ are as in \eqref{decomp}.  
This next lemma is the essential oscillatory fact, concerning a $ T ^{\ast} T$ estimate for convolution with respect to $ \phi _k$. 
Note that it holds for polynomials with no constant or linear term.

\begin{lemma}\label{l:same0}  Assume that (a) $ d\geq 2$, (b) the polynomial $ P$ does not have constant or linear terms, 
and is not solely a function of $ x$, 
and (c) $ \lVert P\rVert \geq  1$, and $ k \geq t_n$, for a dimensional constant $ t_n$.  

For each cube $ K$ with $  \ell (K) = 2 ^{k}$, there is a set $ Z_K \subset K \times K $ so that these three conditions hold. 
\begin{enumerate}
\item  For all $ t_n\leq j \leq k$, we have 
\begin{equation}\label{e:same}
\begin{split}
\mathbf 1_{K \times K} (x,y)\Bigl\lvert  &
\int \phi _k (x,z) \overline{ \phi _j (z,y)}  \;dz 
\Bigr\rvert
\\&\leq C_0 \{ 2 ^{-nk} \mathbf 1_{ Z_K } (x,y)+      2 ^{- (n+ \epsilon_d )k}  \mathbf 1_{K \times K} (x,y)\}. 
\end{split}
\end{equation}
Above, $ \tilde \phi (x,y) = \overline  \phi (-y)$, and $ \epsilon _d $ is as in \eqref{e:epsilon}. 
\item The sets $Z_K$ have the following ``small-neighborhoods'' property: for any $1 \leq 2^s \leq 2^k$,
\begin{equation}\label{e:Z0}
|Z_K + \{|(x,y)| \leq 2^s \}| \lesssim |K|^2(2^{-\epsilon_dk} + 2^{s-k});
\end{equation}
here we are taking the Minkowski sum of $Z_K$ and the $2^s$ ball;
\item  In fact, the sets $ Z_K  $  satisfy the fiber-wise estimate 
\begin{equation}\label{e:Z} 
\sup_{x\in K}
| \pi_x Z_K + \{ (0,y)  \;:\;   \lvert  y\rvert  \leq 2^s \}| \lesssim  2 ^{nk} ( 2^{- \epsilon_d k} + 2^{s-k} ),  
\qquad 1 \leq 2^s \leq 2 ^{k},
\end{equation}
where $\pi_x Z = \{ y \;:\; (x,y)\in Z\}$, is the $x$-fiber of $Z$, and  
in \eqref{e:Z} we are taking the Minkowski sum of $ Z_K$ and a ball in $ \mathbb R ^{n}  $, and measure is taken in $\mathbb R^n$.   
\end{enumerate}

\end{lemma}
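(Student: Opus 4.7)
The plan is to write the integral as an oscillatory integral in $z$ with polynomial phase, apply the van der Corput estimate (Lemma~\ref{SW}) off a small exceptional set $Z_K$, and control the geometry of $Z_K$ via the sublevel set bound (Lemma~\ref{l:level}).

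After collecting exponentials and pulling out $\psi_j(2^{-j}y)$, the integral becomes
\[
\int \phi_k(x,z)\, \overline{\phi_j(z,y)}\,dz = 2^{-n(k+j)}\, \overline{\psi_j(2^{-j}y)} \int e\bigl( P(x,z) - P(z,y) \bigr) \psi_k(2^{-k}z) \,dz.
\]
Viewing $\Phi_{x,y}(z) := P(x,z) - P(z,y)$ as a polynomial in $z$, expand $\Phi_{x,y}(z) = \sum_\beta R_\beta(x,y)\,z^\beta$, where each $R_\beta(x,y)$ is a polynomial in $(x,y)$ built from the coefficients of $P$. After rescaling $z = 2^k z'$, the unit-scale polynomial in $z'$ has coefficient norm $\|\tilde\Phi_{x,y}\| = \sum_{|\beta|\geq 1} 2^{k|\beta|}|R_\beta(x,y)|$, and Lemma~\ref{SW} bounds the corresponding unit-scale integral by $\|\tilde\Phi_{x,y}\|^{-1/d}$. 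Tracking the $2^{nk}$ Jacobian and prefactors yields
\[
\Bigl| \int \phi_k(x,z)\,\overline{\phi_j(z,y)}\,dz \Bigr| \lesssim 2^{-nj}\, \|\tilde\Phi_{x,y}\|^{-1/d}.
\]

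Define $Z_K := \{(x,y)\in K\times K : \|\tilde\Phi_{x,y}\| < 2^{\delta k}\}$ for a constant $\delta > 0$ tuned to match the exponent $\epsilon_d$. Off $Z_K$, the van der Corput bound above directly produces the improved decay $2^{-(n+\epsilon_d)k}$. On $Z_K$, the strategy is to exploit $\int \psi_k = 0$: subtracting $e(\Phi_{x,y}(0))$ inside the $z$-integral replaces $e(\tilde\Phi)$ by $e(\tilde\Phi) - e(\tilde\Phi(0))$, which is pointwise $\lesssim \|\tilde\Phi_{x,y}\|$ on the unit support of $\psi_k$; after restoring prefactors this lies at or below the claimed $2^{-nk}$. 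For the measure estimates \eqref{e:Z0} and \eqref{e:Z}, decompose $Z_K$ as a union over $|\beta|\geq 1$ of sublevel sets $\{|R_\beta(x,y)| < 2^{\delta k - k|\beta|}\}$. The hypothesis $\|P\| \geq 1$ together with the absence of constant and linear terms ensures that some $R_\beta$, viewed on the cube of side $2^k$, has coefficient norm bounded below; Lemma~\ref{l:level} applied in the joint variable $(x,y)$ then yields \eqref{e:Z0}. For the fiber-wise refinement \eqref{e:Z}, apply Lemma~\ref{l:level} in $y$ alone for each fixed $x$: the hypothesis that $P$ is not solely a function of $x$ ensures that some $R_\beta$ has a nontrivial $y$-polynomial part whose coefficient norm in $y$ is independent of $x$ (the $x$-dependent piece of $R_\beta(x,y)$ contributes only a constant in $y$, which does not affect the norm used in Lemma~\ref{l:level}). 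The additive $2^{s-k}$ term in the Minkowski sum arises from thickening these sublevel sets by $2^s$, using that each $R_\beta$ has controlled $C^1$-norm at scale $2^k$.

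The main obstacle I anticipate is the bound on $Z_K$ itself: the trivial estimate $2^{-nj}$ exceeds the target $2^{-nk}$ whenever $j<k$, so real oscillatory cancellation via the mean-zero property of $\psi_k$ is indispensable. Balancing the threshold $\delta$ so that this crude estimate on $Z_K$ dovetails with the van der Corput estimate off $Z_K$, and securing the fiber-wise sublevel bound uniformly in $x$, are the delicate points—which is exactly where the structural hypotheses (degree $\geq 2$, no constant or linear terms, genuine $y$-dependence) enter.
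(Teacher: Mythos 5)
Your overall plan — van der Corput in $z$ off a small exceptional set $Z_K$, then control the geometry of $Z_K$ — matches the paper's. But there are two problems, one of which is fatal to the proposal as written.

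First, a structural issue with the kernel. Taking the formula in the statement literally, you pull $\overline{\psi_j(2^{-j}y)}$ out of the $z$-integral, leaving a single $z$-localization at scale $2^k$. The intended $T\,T^*$ kernel (see the explicit display in the paper's proof) has \emph{two} $z$-dependent cutoffs, $\psi_j(2^{-j}(z-y))$ and $\psi_k(2^{-k}(x-z))$, and the phase is $P(x,z)-P(y,z)$ with $z$ in the \emph{same} slot in both, so the $z$-coefficients are $R_\beta(x)-R_\beta(y)$ with $R_\beta$ a polynomial of one variable. With that kernel the $z$-integral runs over a set of measure $\sim 2^{nj}$, and the trivial $L^\infty$ bound is already $2^{-nj-nk}\cdot 2^{nj}=2^{-nk}$: the estimate \emph{on} $Z_K$ is the trivial one, and no mean-zero cancellation is needed there. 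In your setup the trivial bound is $2^{-nj}$, and your mean-zero repair only improves this to $\lesssim 2^{-nj}\|\tilde\Phi\|\lesssim 2^{-nj+\delta k}$, which still exceeds $2^{-nk}$ whenever $j<k$. So your attempted fix on $Z_K$ does not close, and the correct kernel makes it unnecessary.

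Second, and more seriously, the neighborhood estimates \eqref{e:Z0} and \eqref{e:Z} do not follow from Lemma~\ref{l:level} plus ``thickening, using controlled $C^1$ norm.'' Lemma~\ref{l:level} controls the \emph{measure} of a sublevel set; it says nothing about the measure of its Minkowski $2^s$-neighborhood. In dimension $n\geq 2$ the sublevel set is (a neighborhood of) an algebraic variety, which can be unbounded and spread across the cube, and its $2^s$-thickening can be much larger than the original measure — a bound on $\nabla R_\beta$ does not prevent this. Establishing the neighborhood bound is precisely the nontrivial content: the paper devotes Lemmas~\ref{l:neighborhood}, \ref{l:strips}, \ref{l:tech} to it, using the Chanillo--Christ strip/interval covering and an induction on degree (applying the induction hypothesis to each $\partial_r P$, and rotating via Lemma~\ref{l:tech} to ensure the gradient components are uniformly large off the exceptional set, so that along each strip the relevant set is covered by $O(1)$ intervals). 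Your proposal skips this machinery entirely, which is a genuine gap — it is the heart of the proof of \eqref{e:Z0} and in particular of the fiber-wise estimate \eqref{e:Z}.

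A small note: you write $Z_K$ as a \emph{union} of the sets $\{|R_\beta(x,y)|<2^{\delta k-k|\beta|}\}$; the smallness of the $\ell^1$-sum gives an \emph{intersection}, and you want to pick a single $\beta_0$ with $\|R_{\beta_0}\|\gtrsim 1$ (or, as the paper does, use sign choices $\sigma$ to reduce to $O(1)$ single polynomials). That part is fixable; the neighborhood estimate is the real obstruction.
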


The estimate \eqref{e:same} is uniform in $ 1\leq j \leq k$, and the  right side has two terms. 
The second term is the one in which we have additional decay in the convolution, beyond what we naively expect. 
The first term involving $ Z_K$ is that term for which we do not claim any additional decay in the convolution. 
Thus, additional information about the set $ Z_K$ is needed, which is the content of \eqref{e:Z}. 
Part of this information is well-known: $ Z_K$ has small measure \eqref{e:Z0}.  
The more refined information in \eqref{e:Z} is that \emph{on each fiber},  the measure of a neighborhood of the set 
is small. 
This condition is not formulated  by Chanillo and Christ \cite{MR883667}, but follows from their techniques, which we present below.

\begin{proof}
Write the polynomial $ P (x,y)$ as 
\begin{equation} \label{e:P}
P (x,y) = \sum_{\alpha, \beta \;:\; \lvert  \alpha \rvert + \lvert  \beta \rvert \geq 2,\ \beta \neq 0  } \lambda _{\alpha, \beta  } x ^{\alpha } y ^{\beta }, 
\end{equation}
where $ \lVert  \lambda \rVert = \sum_{\alpha , \beta } \lvert  \lambda _{\alpha , \beta }\rvert =1 $. 
We will use the van der Corput estimate \eqref{e:vdc} to estimate the integral in $ z$ in \eqref{e:same}.
The integral in \eqref{e:same} is explicitly 
\begin{equation*}
  2^{-jn - kn} \int e (  P (x,z) - P (y,z) ) \psi _j (2^{-j}  (z-y)) \psi_k (2^{-k} (x-z)  ) \; dy 
\end{equation*}
Write  $ P (x,y) = \sum_{\alpha, \beta  } \lambda _{\alpha, \beta  } x ^{\alpha } y ^{\beta }$ the phase function above as 
\begin{gather*}
 P (x,z) - P (y,z)= \sum_{ \beta  \;:\; \lvert  \beta \rvert >0 }  [R _{\beta  }  (x)- R_{\beta} (y)] z ^{\beta  }, 
\\
 \textup{where} \quad R _{\beta  } (x)  = \sum_{\alpha \;:\; \lvert  \alpha \rvert + \lvert  \beta \rvert >1    } \lambda _{\alpha , \beta } x ^{\alpha  }. 
\end{gather*}
Above we have $ \lvert  \alpha \rvert \geq 1  $.   Then, by \eqref{e:vdc},  
\begin{equation*}
\textup{LHS of \eqref{e:same}} \lesssim  \lvert  I\rvert ^{-1}   \Bigl[  \sum_{\beta  \;:\;  \lvert  \beta  \rvert \geq 1  }   (\ell I) ^{\lvert  \beta  \rvert  } \lvert R _{\beta  }  (x) -R_{\beta} (y)\rvert \Bigr]  ^{- 1/ d }.    
\end{equation*}
Therefore, we  take the set $ Z_I$ to be 
\begin{equation} 
\label{e:Zk}
Z_I =  \Bigl\{   (x,y)  \in I \times I\;:\;  \sum_{\beta }   (\ell I) ^{\lvert  \beta \rvert  } \lvert R _{\beta  }  (x) -R_{\beta} (y)\rvert < 2 ^{k/2} \Bigr\}.  
\end{equation}
We see that   that \eqref{e:same} holds. 
This completes the first part of the conclusion of the Lemma. 

For the second part, the set $ Z_I$ in \eqref{e:Zk} is contained in the set $ \bigcup _{\sigma } Z (\sigma )$, where 
\begin{equation*}
Z (\sigma ) :=  \Bigl\{  (x,y) \in I \times I  \;:\; 
\Bigl\lvert \sum_{\beta } \sigma (\beta )  \ell (I) ^{\lvert  \beta \rvert }
[R _{\beta  }  (x)- R_{\beta} (y)]
 \Bigr\rvert <  2 ^{k/2}
\Bigr\}. 
\end{equation*}
The union is over all choices of signs $ \sigma \;:\; \{ \beta \} \mapsto  \{\pm 1\}$. There are $ O (2 ^{nd}) = O (1)$ such choices of $ \sigma $. 
Fixing $ \sigma $, the polynomial of two variables
$\sum_{\beta } \sigma (\beta )  \ell (I) ^{\lvert  \beta \rvert }
[R _{\beta  }  (x)- R_{\beta} (y)]$ has norm at least one, since $\| P \| \geq 1$; similarly, if we fix $x$ as well, the polynomial of one variable $ \sum_{\beta } \sigma (\beta )  \ell (I) ^{\lvert  \beta \rvert } R _{\beta }(y)$ 
has norm at least one.
It follows from Lemma~\ref{l:neighborhood} below, applied in dimensions $2n$ and $ n$, that the set $ Z (\sigma )$ satisfies the estimates \eqref{e:Z0} and \eqref{e:Z}. 
That  completes the proof. 
\end{proof}

This is the main point that remains to be addressed. 

\begin{lemma}\label{l:neighborhood}   Let $ P = P (x)$ be a polynomial on $ \mathbb R ^{n}$  with $ \lVert P \rVert  \geq 1$ and degree $d$. 
Let   $ I$ be a cube of side length $ \ell (I) = 2 ^{k} \geq  2 ^{t_n}$,  and $ 2 ^{t_n} \leq 2 ^{s} \leq 2 ^{k}$. We have the estimate 
\begin{equation}\label{e:Neighborhood}
\lvert  Z_I + \{ x  \;:\;   \lvert  x \rvert  \leq 2^s \}\rvert \lesssim  
\lvert  I\rvert  
\bigl\{
 2 ^{-\epsilon _d k/2}   +    2 ^{s - k}
\bigr\}. 
\end{equation}
where $ Z_I =  \{  x \in I \;:\; \lvert  P (x)\rvert < \ell (I) ^{1/2}  \}$.   
\end{lemma}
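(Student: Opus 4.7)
The plan is to split $\tilde Z := Z_I + \{|x|\leq 2^s\}$ into $Z_I$ itself and the tube $\tilde Z\setminus Z_I$ around its boundary, matching the two terms on the right side of \eqref{e:Neighborhood}.

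For the bound on $|Z_I|$ alone, I apply the sublevel set estimate \eqref{e:level} with $\epsilon = \ell(I)^{1/2}=2^{k/2}$. The hypothesis $\|P\|\geq 1$ guarantees a coefficient $|\lambda_{\alpha_0}|\gtrsim 1$ with $|\alpha_0|\geq 1$, and combined with $\ell(I)=2^k\geq 2^{t_n}\geq 1$ this forces the scaled norm $N := \sum_\alpha \ell(I)^{|\alpha|}|\lambda_\alpha|$ to satisfy $N\geq \ell(I)^{|\alpha_0|}|\lambda_{\alpha_0}|\gtrsim 2^k$. Consequently, $|Z_I|\lesssim |I|\cdot 2^{k/(2d)} N^{-1/d}\lesssim |I|\cdot 2^{-k/(2d)} = |I|\cdot 2^{-\epsilon_d k}$, which is comfortably smaller than $|I|\cdot 2^{-\epsilon_d k/2}$.

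For the tube $\tilde Z\setminus Z_I$, the key observation is that it is contained in the $2^s$-neighborhood of the topological boundary $\partial Z_I\subset(\partial I)\cup(\{|P|=2^{k/2}\}\cap I)$. The $2^s$-neighborhood of $\partial I$ has measure $\lesssim 2^s\ell(I)^{n-1}=|I|\cdot 2^{s-k}$. The $2^s$-neighborhood of the algebraic hypersurface $\{|P|=2^{k/2}\}\cap I$ is handled by combining the Minkowski-content tube inequality $|V+\{|y|\leq r\}|\lesssim r\cdot H^{n-1}(V)$, valid for rectifiable $V$, with the classical perimeter bound
\begin{equation*}
H^{n-1}(\{P=t\}\cap I)\lesssim_d \ell(I)^{n-1},
\end{equation*}
uniform in $t$. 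Together these give $|\tilde Z\setminus Z_I|\lesssim 2^s\cdot\ell(I)^{n-1}=|I|\cdot 2^{s-k}$; summing with the bound on $|Z_I|$ yields \eqref{e:Neighborhood}.

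The main technical obstacle is the algebraic perimeter estimate for the level set $\{P=t\}\cap I$. This is a Milnor--Thom/Gromov-type fact, and a self-contained proof proceeds by a Cauchy--Crofton slicing argument: along almost every affine line, $P$ restricts to a polynomial of degree $\leq d$, so by the fundamental theorem of algebra the line meets $\{P=t\}$ in at most $d$ points; integrating this section count over translations (and, if one prefers, averaging over a spanning family of line directions) yields the claimed bound on $H^{n-1}$.
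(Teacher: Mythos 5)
Your decomposition into $|Z_I|$ itself plus the tube around $\partial Z_I$ is a genuinely different route from the paper, which instead proceeds by induction on degree: it rotates the coordinates (Lemma~\ref{l:tech}) so that every partial derivative $\partial_r P$ has norm $\gtrsim 1$, excises the set where some derivative is small (handled by the inductive hypothesis together with the Chanillo--Christ strip Lemma~\ref{l:strips}), and then exploits a monotonicity of $P$ in the direction $\theta_\sigma^{-1}\vec{e}_n$ on what remains, so that each $s$-strip meets the level set in $O(1)$ cubes. Your first half --- the bound $|Z_I|\lesssim |I|\,2^{-\epsilon_d k}$ via the sublevel estimate \eqref{e:level} and the lower bound $N=\sum_\alpha \ell(I)^{|\alpha|}|\lambda_\alpha|\gtrsim 2^k$ coming from $\|P\|\geq 1$ and $\ell(I)\geq 1$ --- is correct and matches what the paper gets for free.

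The gap is in the tube estimate. The inequality $|V+\{|y|\leq r\}|\lesssim r\,\mathcal H^{n-1}(V)$ is \emph{not} valid for general $(\mathcal H^{n-1},n-1)$-rectifiable sets $V$: upper Minkowski content is not controlled by Hausdorff measure without additional regularity (a countable, $\mathcal H^{n-1}$-finite union of hyperplane pieces dense in a ball is rectifiable with finite Hausdorff measure, yet its $r$-tube fills the ball for every $r>0$). So the chain ``Crofton gives $\mathcal H^{n-1}(\{P=t\}\cap I)\lesssim_d \ell(I)^{n-1}$, and then a rectifiable tube inequality upgrades this to a volume bound'' does not close. What you actually need is the \emph{quantitative} tube-volume bound for real algebraic hypersurfaces of bounded degree, $|(\{P=t\}\cap I)+\{|y|\leq r\}|\lesssim_{n,d}\, r\,\ell(I)^{n-1}$ for $r\leq \ell(I)$; this is a theorem (Wongkew, and later Yomdin--Comte/Gromov in the semialgebraic setting), but it is genuinely harder than the Crofton count of line intersections, precisely because it has to rule out the pathological rectifiable examples above using the algebraic structure. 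Your closing Crofton/slicing sketch gives the $\mathcal H^{n-1}$ bound but does not, as written, give the tube volume --- a point on a line can lie in $V+B_r$ without being within $r$ of $V\cap$ that line. Either cite the algebraic tube theorem explicitly, or replace the rectifiable-set claim with an argument tailored to varieties of bounded degree (for instance, coarea over the level sets of $\mathrm{dist}(\cdot,V)$, with a Milnor--Thom bound for the resulting semialgebraic sets); otherwise this step fails. One advantage of the paper's route is that it avoids invoking any such deep algebraic tube estimate, getting by with the elementary strip lemma plus a monotonicity argument, at the cost of the degree induction and the rotation bookkeeping.
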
 

The case of dimension $n=1 $ is easy.  The set $ Z_I$ has small measure by the van der Corput estimate \eqref{e:vdc}. 
But, it is the pre-image of an interval under a degree $ d$ polynomial $ P$.  Hence it has $ O (d) = O (1)$ components. 
From this, \eqref{e:Neighborhood} is immediate.  

\smallskip

The higher dimensional case   requires some additional insights, because level sets in two and higher dimensions are, in general, unbounded algebraic varieties.   
We need the following Lemma, drawn from Chanillo and Christ  \cite{MR883667}. By a $k$-strip we mean the set 
\begin{equation}\label{e:strip}
 S = \bigcup_{j \in \mathbb{Z}} Q + 2^k (0, \dots, 0, j) , \qquad  \textup{$ Q$ is a cube}
\end{equation}
 By a $k$-interval we mean a (possibly infinite) subset of $S$ given by
\[ I = \bigcup_{j_0 <j < j_1} Q+ 2^k (0, \dots, 0, j), \ \text{ for } j_0, j_1 \in \{\pm \infty \} \cup \mathbb{Z}.\]

\begin{lemma}[\cite{MR883667}, Lemma 4.2] \label{l:strips}
For any dimension $n$ and degree $d$, there is a $C \lesssim_{d,n} 1$ so that for any $A > 0$, and any polynomial $P$ of degree $d$, and any $k$-strip $S$, the subset of $S$ given by
\[ \bigcup \{ Q \in \mathcal{D}_k^{\vec{\omega}}: Q \subset S, Q \cap \{ |P(x)| < A \} \neq \emptyset \} \]
is a union of at most $C$ $k$-intervals.
\end{lemma}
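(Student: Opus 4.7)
The plan is to project the sublevel set $\{|P|<A\}$ along the long axis of the strip and count the resulting connected components, invoking only that polynomial sublevel sets have bounded combinatorial complexity in $d$ and $n$.

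First, after permuting coordinates if necessary, parametrize $S = Q_0 \times \mathbb{R}$, where $Q_0 \subset \mathbb{R}^{n-1}$ is an $(n-1)$-dimensional cube of side $2^k$, so that the cubes $Q \in \mathcal{D}_k^{\vec{\omega}}$ contained in $S$ take the form $Q_j = Q_0 \times I_j$ for consecutive intervals $I_j \subset \mathbb{R}$ of length $2^k$. Writing $\pi : \mathbb{R}^n \to \mathbb{R}$ for projection onto the $n$-th coordinate, the crucial observation is that $Q_j \cap \{|P|<A\} \neq \emptyset$ if and only if $I_j \cap E \neq \emptyset$, where
\[ E := \pi\!\left(\{|P|<A\}\cap S\right) = \{\, t\in \mathbb{R} \,:\, \exists\, x' \in Q_0,\ -A < P(x',t) < A\,\}. \]

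The second step is to bound the number of connected components of $E$. The set $E$ is a first-order definable subset of $\mathbb{R}$ built from polynomials of degree at most $d$ in $n$ variables (the two sublevel inequalities together with the cube constraints on $x'$), so by Tarski--Seidenberg it is semi-algebraic, hence a finite union of intervals. Quantitative real algebraic geometry (Milnor--Thom, or the Basu--Pollack--Roy bounds) then bounds the number of its connected components by a constant $N = N(d,n)$ depending only on $d$ and $n$.

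Finally, each connected component of $E$ is an interval in $\mathbb{R}$ and therefore meets a consecutive block of the $I_j$'s. Two distinct maximal runs of indices $j$ with $I_j \cap E \neq \emptyset$ are separated by some $I_{j_0}$ disjoint from $E$, hence must arise from distinct components of $E$. Therefore the number of runs is at most $N(d,n)$, each producing exactly one $k$-interval in the strip. The main obstacle is invoking the quantitative semi-algebraic bound cleanly with explicit dependence on $d$ and $n$; a more hands-on alternative, closer in spirit to Chanillo--Christ, would proceed by induction on $n$, exploiting that for fixed $x'$ the set $\{t : |P(x',t)| < A\}$ is a union of at most $2d+1$ intervals, and tracking how the $2d$ boundary curves $P(x',t) = \pm A$ deform as $x'$ ranges over $Q_0$.
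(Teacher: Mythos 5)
The paper does not prove this lemma; it imports it verbatim from Chanillo and Christ \cite{MR883667} (their Lemma 4.2) and remarks only that the $k=0$ case given there implies the general case after rescaling. So you have supplied a proof of something the paper treats as a black box, and the question is whether your proof is correct, not whether it matches the paper's.

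Your reduction is correct. Parametrizing $S = Q_0 \times \mathbb{R}$ so that the cubes of $\mathcal D_k^{\vec\omega}$ inside $S$ are $Q_j = Q_0 \times I_j$, noting that $Q_j$ meets $\{|P|<A\}$ if and only if $I_j$ meets $E := \pi(\{|P|<A\}\cap S)$, and then observing that distinct maximal runs of good indices $j$ are separated by some $I_{j_0}$ disjoint from $E$, so must arise from distinct components of $E$ (a single component of $E$ is an interval and would then contain the intermediate $I_{j_0}$) — all of that is airtight. The count of runs is therefore at most $b_0(E)$, and each run is one $k$-interval.

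The one step that needs tightening is the bound $b_0(E) \lesssim_{d,n} 1$. Tarski--Seidenberg tells you $E$ is semi-algebraic, but the polynomials cutting it out \emph{after} quantifier elimination can have degree far exceeding $d$, so a bare invocation of Milnor--Thom on $E$ itself does not immediately give a bound depending only on $d,n$. Two clean fixes: (a) bound the number of components upstream — the set $X := \{|P|<A\}\cap S$ is defined by two polynomial inequalities of degree $\leq d$ together with $2(n-1)$ linear ones, so Oleinik--Petrovsky--Thom--Milnor gives $b_0(X) \lesssim_{d,n} 1$ (this holds for unbounded semi-algebraic sets), and since $\pi$ is continuous with $\pi(X) = E$, each component of $X$ maps into one component of $E$, hence $b_0(E) \leq b_0(X)$; or (b) cite the Basu--Pollack--Roy bounds on Betti numbers of semi-algebraic sets \emph{defined with quantifiers}, which control the projection directly but are heavier machinery. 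Your proposal gestures at both names without distinguishing which result applies where; option (a) is the one that works with the minimum fuss.

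The alternative argument you sketch at the end — fixing $x'$, observing that $\{t : |P(x',t)|<A\}$ has at most $d+1$ components, and tracking how the curves $P(x',t) = \pm A$ evolve in $x'$ — is closer in spirit to the Chanillo--Christ original. The algebraic-geometry route you chose is slicker once the RAG bound is applied correctly, at the cost of invoking a nontrivial quantitative theorem; the hands-on route is more elementary and self-contained. Both are valid; the paper simply defers to \cite{MR883667}.
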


The Lemma above is proven for $k=0$,   but as the result in \cite{MR883667} holds for polynomials of arbitrary norm, the statement therein implies the one above.  It likewise holds for any rotation of a strip, which we will reference shortly.  

We will also recall the following behavior of the coefficient norm $\| P\|$ under the action of the orthogonal group, $\mathcal{O}(n)$.

\begin{lemma}\label{l:tech} For any degree $d$, and any $n \geq 2$, if $P$ has degree $d$, then
\[ \| P \| \approx_{d,n} \| P \circ \theta \| \]
for any $\theta \in \mathcal{O}(n)$.
Moreover,   for $ d\geq 2$, there exists some $\theta = \theta(P) \in \mathcal{O}(n)$ so that
 for any choice of $  1 \leq j < k  \leq n$, there holds 
\begin{equation}\label{e:lower}
\| \partial_j ( P \circ \theta) \| 
\gtrsim_{d,n} \| P \| .
\end{equation}

\end{lemma}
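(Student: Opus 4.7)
The plan for both parts is to exploit that polynomials of degree $\leq d$ on $\mathbb{R}^n$ form a finite-dimensional vector space, on which compactness and continuity arguments are decisive. Throughout, note that adding a constant to $P$ affects neither side of either conclusion, so I may harmlessly restrict to polynomials with zero constant term; on that subspace, the coefficient norm $\|\cdot\|$ is a genuine norm.

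For the first part, for each $\theta\in\mathcal{O}(n)$, the composition map $P\mapsto P\circ\theta$ is a linear automorphism of the (finite-dimensional) space $\mathcal{P}_{\leq d}$ of polynomials of degree at most $d$. Since all norms on a finite-dimensional space are equivalent, the operator norm of $P\mapsto P\circ\theta$ with respect to $\|\cdot\|$ is a continuous function of $\theta$; by compactness of $\mathcal{O}(n)$ it is uniformly bounded above by some constant $C_{d,n}$. Applying the same argument to $\theta^{-1}=\theta^{T}$ yields the reverse inequality.

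For the second part, normalize $\|P\|=1$ and let $\Sigma$ be the (compact) unit sphere in the space of constant-free polynomials of degree $\leq d$. Consider the continuous function
\[
f(P)=\sup_{\theta\in\mathcal{O}(n)}\min_{1\leq j\leq n}\|\partial_{j}(P\circ\theta)\|.
\]
By compactness of $\Sigma$, the bound \eqref{e:lower} follows once I show $f(P)>0$ for every $P\in\Sigma$. Suppose for contradiction that $f(P)=0$: then for each $\theta$ there exists $j=j(\theta)\in\{1,\dots,n\}$ with $\partial_{j(\theta)}(P\circ\theta)\equiv 0$, i.e.\ $\partial_{v}P\equiv 0$ for the direction $v=\theta e_{j(\theta)}$. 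Set
\[
V:=\{v\in S^{n-1}:\partial_{v}P\equiv 0\},
\]
which is a real algebraic subvariety of $S^{n-1}$, since the requirement is one polynomial equation in $v$ per monomial coefficient of $P$.

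The contradiction comes from a genericity count. If $V\subsetneq S^{n-1}$, then $V$ has surface measure zero in $S^{n-1}$; for each $j$ the map $\theta\mapsto\theta e_{j}$ is a smooth submersion $\mathcal{O}(n)\to S^{n-1}$, so its preimage of $V$ is a Haar-null subset of $\mathcal{O}(n)$. The finite union over $j=1,\dots,n$ of these preimages remains Haar-null, hence cannot cover all of $\mathcal{O}(n)$, contradicting the hypothesis that every $\theta$ lies in this union. Therefore $V=S^{n-1}$, which forces $\nabla P\equiv 0$ and $P\equiv 0$ in the constant-free space, contradicting $\|P\|=1$. The only non-routine step is this generic-covering argument; everything else is standard compactness/continuity, and the dependence of the implicit constant on $d,n$ alone is automatic from the compactness of $\Sigma$ and $\mathcal{O}(n)$.
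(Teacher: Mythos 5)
Your proof is correct and follows the same broad outline as the paper's (compactness for the norm equivalence; a genericity argument by contradiction for the lower bound), but the mechanism in the second part is different. The paper negates the conclusion, passes to a polynomial $P$ and a covering $\mathcal{O}(n) = \bigcup_j \Theta_{\partial_j}$ by closed sets $\Theta_{\partial_j} = \{\theta : \partial_j(P\circ\theta)\equiv 0\}$, and invokes the Baire Category Theorem to produce one $\Theta_{\partial_j}$ with nonempty interior, whence $\partial_v P \equiv 0$ on an open set of directions $v$ and $P$ is constant. You instead observe that $V = \{v\in S^{n-1}:\partial_v P\equiv 0\}$ is a proper algebraic (in fact, linear) subvariety, hence null, and that the submersions $\theta\mapsto\theta e_j$ pull a null set back to a Haar-null set, so the union over $j$ cannot exhaust $\mathcal{O}(n)$. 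Both are essentially the same ``$\mathcal{O}(n)$ cannot be covered by small sets'' argument, one topological, one measure-theoretic; yours has the added merit of introducing the continuous function $f(P)=\sup_\theta\min_j\|\partial_j(P\circ\theta)\|$ on the compact unit sphere $\Sigma$, which makes the extraction of a uniform constant cleaner than the paper's more informal compactness-and-contradiction phrasing (which strictly speaking would need a limiting step from ``$<1/k$ for all $k$'' to ``$=0$''). One small remark: your argument, like the paper's, does not actually use $d\geq 2$; the restriction in the statement is inherited from the application rather than needed for the lemma itself.
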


\begin{proof}
We argue by compactness and contradiction.  If the conclusion does not hold for some choice of dimension $ n$ and degree $ d$, for all integers $ j$, we can select $ P _{\lambda _j}$ and $ \theta _j$ so that $ \lVert P _{\lambda _j}\rVert=1$ and $ \lVert P _{\lambda _j}  \circ \theta _j\rVert < 1/j$.  For some subsequence, we must have $ \{ \lambda _{\alpha ,j} \;:\; \lvert  \alpha \rvert \leq d \} \to \{\lambda _ \alpha \;:\; \lvert  \alpha \rvert \leq d  \}$, and $ \theta _j \to \theta $.  We conclude that $ \lVert P _{\lambda }\rVert = 1$ and $ \lVert P _{\lambda } \circ \theta \rVert=0$, which is a contradiction. 

\bigskip

Turning to the second claim, let $ \mathcal D$ be the collection of differential operators 
\begin{equation*}
\partial _j \quad 1\leq j \leq n. 
\end{equation*}
We argue  again, by contradiction and compactness.  
For some choice of dimension $ n$ and degree $ d$,  there is a polynomial $ P$ with $ \lVert P\rVert=1$ so that 
for all $ \theta \in \mathcal O (n)$, there is a choice of $  D \in \mathcal D$ so that  
$   D(P \circ \theta)    =0$. 
The map $ \theta \to  D (P \circ \theta ) $ is continuous, so that the set $\Theta _D=  \{\theta \;:\;   D (P \circ \theta)    =0\}$ is closed. 

We also have $ \bigcup _{D\in \mathcal D} \Theta _D  = \mathcal O (n)$. The Baire Category Theorem implies that  for some $ D$, there is a  $ \theta _0$ in the interior of $ \Theta  _{D}$. Hence,  for all $ \theta $ sufficiently close to  $ \theta _0$, we have 
$  D P \circ \theta  =0$.  But the degree of $ P$ is at least 2, so this is can only happen if $ P$ itself is zero, which is a contradiction.

\end{proof}

\begin{proof}[Proof of Lemma \ref{l:neighborhood}]
For $s \in \mathbb{Z}$, and subsets $A \subset \mathbb{R}^n$, set  $ A^s := A + \{ |x| \leq 2^s \}$.

We  prove: For each $P$ of degree at most $d \geq 1$, $\| P \| \gtrsim_{d,n} 1$, and any $k \geq 0$,  
and any cube $ I$ with $ \ell (I) = 2 ^{k}$,  for the set $ Z=\{  x \in I : |P(x)| \leq 2^{k/2} \}$, we have 
\begin{equation}\label{goal}
 | Z ^s| \lesssim_{d,n} 2^{nk}( 2^{-\epsilon_{d}k} + 2^{s-k} ), \qquad  1 \leq s \leq k. 
\end{equation}
In view of our definition of $ Z_I$ in \eqref{e:Zk},  and the condition $ \lVert Q _{\beta _0}\rVert \gtrsim 1$, this proves \eqref{e:Neighborhood}.  

\smallskip 

We will induct on the degree of the polynomial, so let us first assume that $d=1$, and that $P$ is linear. But then, the 
set $ Z  $ is a of the form $ \{  x \in I  \;:\; \lvert   \langle \xi , x  \rangle\rvert < 2 ^{k/2} \}$, for some choice of $ \lVert \xi \rVert \approx 1$.  It is clear that \eqref{goal} holds.  

Henceforth, we will assume that $d \geq 2$. Now, since \eqref{goal} is invariant under replacement of $x$ by $\theta x$, for $\theta \in \mathcal{O}(n)$, we may assume by Lemma \ref{l:tech} that 
the condition \eqref{e:lower} holds.  
Thus, the induction hypothesis applies to each polynomial $ \partial _{r} P$,  $ 1\leq r \leq n$.
As a consequence, we have this.  
\begin{gather}\label{e:E}
|E^{s+c_n} | \lesssim_{d,n} 2^{nk}( 2^{-\epsilon_{d-1}k} + 2^{s-k} ) \leq 2^{nk} ( 2^{- \epsilon_{d} k} + 2^{s-k} ), \qquad  1 \leq s \leq k, 
\\
\textup{where} \quad  E := \bigcup_{r=1}^n \{ |x| \leq  2^{k+1} : | \partial_r P(x) | \leq 2^{k/2} \}. 
\end{gather}
Above, $ c_n$ is a dimensional constant.  
Observe that each of the sets that we form a union over, when restricted to a strip, can be   covered by $ C _{n,d}$  intervals, by Lemma~\ref{l:strips}. Thus, the same conclusion holds for the union.

The set we need to estimate is the set $ Z  ^{s}$, but off of the set $ E ^{s+c_n}$, which we denote by 
$ X  :=  Z  ^{s} \setminus E ^{s+c_n}$. For each $\sigma: \{ 2,\dots, n\} \to \{ \pm 1\}$, let $\theta_\sigma \in \mathcal{O}_n$ be such that
\[ \theta_\sigma^{-1} \vec{e_n} = \frac{1}{\sqrt{n}} (\sigma(1),\dots, \sigma(n)). \]
Here, $\vec{e_n} := (0,\dots, 0,1)$ is the $n$th basis vector.

With this in mind, write
\[ X = \bigcup_\sigma Y_\sigma,\]
where 
\[ \aligned 
Y_\sigma &:= \{ |x| \lesssim 2^k : d(x,E) > 2^{s+c_n}, |P(x)| \leq 2^{k/2}, &
\\ & \qquad \qquad  \sigma(r) \partial_r P(x) > 2^{k/2} \text{ for all } 1\leq r \leq n \} \\
&=
\{ |x| \lesssim 2^k : d(x,\theta_\sigma^{-1} E) > 2^{s+c_n}, |P(\theta_\sigma x)| \leq 2^{k/2}, 
\\ & \qquad \qquad 
\sigma(r) (\partial_r P)(\theta_\sigma x) > 2^{k/2} \text{ for all } 1\leq r \leq n \}. \endaligned\]
We will now favorably estimate $|Y_\sigma| \lesssim 2^{nk}(2^{-\epsilon_d k} + 2^{s-k})$ for each choice of $\sigma$.

Since any cube of dyadic side length is contained in a dyadic cube of six times its length shifted by some element $\vec{\omega} \in \{ 0, 1/3\}^n$, it suffices to show that for any grid shifted by any $\vec{\omega} \in \{ 0, 1/3\}^n$, and any $s+3$ strip, $S = S_{\vec{\omega}}$, situated in that grid, $S$ meets $Y_{\sigma}$ in at most a bounded number of cubes. Since finite unions and complements of $s+3$ intervals are expressible as finite unions of $s+3$ intervals, it suffices to prove this result for $s+3$ intervals. In particular, it suffices to show that any interval $I \subset S$ that meets $Y_{\sigma}$ does so in at most $2$ cubes.

So, suppose now that $Q \in I$, and $Q \cap Y_\sigma \neq \emptyset$. Since we have excised a $2^{s + c_n}$ neighborhood of $E$, this implies that for every $x \in Q$, $x \notin \theta_{\sigma}^{-1} E $, and thus
\[ |\partial_r P(\theta_\sigma x)| > 2^{k/2} \text{ for all } 1 \leq r \leq n. \]
But, we know that there exists some point $y_Q \in Q \cap Y_\sigma$; for this $y_Q$, we have
\[ \sigma(r) (\partial_r P)(\theta_\sigma y_Q) > 2^{k/2} \text{ for all } 1 \leq r \leq n,\]
so by connectedness and continuity, it follows that
\[ \sigma(r) (\partial_r P)(\theta_\sigma x) > 2^{k/2} \text{ for all } 1 \leq r \leq n \]
for each $x \in Q$.
But now we see that
\[ \aligned 
\partial_n ( P(\theta_\sigma x)) &= \vec{e_n} \cdot \theta_\sigma( \nabla P)(\theta_\sigma x) = 
\theta_\sigma^{-1}(\vec{e_n}) \cdot (\nabla P)(\theta_\sigma x) \\
&= \frac{1}{\sqrt{n}} \sum_{r=1}^n \sigma(r) (\partial_r P)(\theta_\sigma x) > \sqrt{n} 2^{k/2}. \endaligned\]
This strong monotonicity yields the result.

\end{proof}

\subsection{Rademacher-Menshov Theorem}
There is a general principle, a variant of the Rademacher-Menshov inequality that we will reference to control  maximal 
truncations.   This has been observed many times, for an explicit formulation and proof, see  \cite{MR2403711}*{Thm 10.6}. 

\begin{lemma}\label{l:RM}  Let $ (X, \mu )$ be a measure space, and $ \{ \phi _j \;:\; 1\leq j \leq N\}$ a sequence of functions 
which satisfy the Bessel type inequality below, for all sequences of coefficients $c_j \in \{ 0, \pm 1\}$, 
\begin{equation}\label{e:bessel}
\Bigl\lVert \sum_{j=1} ^{N}  c_j \phi _j \Bigr\rVert _{L ^2 (X)} \leq A .  
\end{equation}
Then, there holds 
\begin{equation}\label{e:RM}
\Bigl\lVert\sup _{1< n \leq N} 
\Bigl\lvert 
 \sum_{j=1} ^{n}   \phi _j
\Bigr\rvert
\Bigr\rVert _{L ^2 (X)} \lesssim A   \log(2+ N) .  
\end{equation}

\end{lemma}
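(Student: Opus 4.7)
The plan is to carry out the standard dyadic decomposition of partial sums that underlies the classical Rademacher--Menshov inequality. Fix $n \leq N$ and write $n$ in binary. For each scale $0 \leq k \leq \lceil \log_2(2+N)\rceil$ and each index $m$, introduce the dyadic block
\[ B_{k,m} := \sum_{j \in I_{k,m}} \phi_j, \qquad I_{k,m} := ((m-1) 2^k, m\, 2^k] \cap \{1,\ldots,N\}. \]
An elementary inspection of the binary expansion of $n$ shows that for every $n$ there is a collection of at most one block per scale $k$ whose disjoint union is $\{1,\dots,n\}$; hence $S_n := \sum_{j=1}^n \phi_j$ is a sum of at most $\lceil \log_2(2+N)\rceil$ blocks, one from each scale. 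Consequently,
\[ \sup_{1< n\leq N} |S_n(x)| \;\leq\; \sum_{k=0}^{\lceil \log_2(2+N)\rceil} \max_{m} |B_{k,m}(x)|. \]

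Next I would control $\|\max_m |B_{k,m}|\|_{L^2(X)}$ for each fixed scale $k$ by the trivial pointwise domination $\max_m |B_{k,m}|^2 \leq \sum_m |B_{k,m}|^2$, so
\[ \Bigl\| \max_m |B_{k,m}| \Bigr\|_{L^2(X)}^2 \;\leq\; \sum_m \|B_{k,m}\|_{L^2(X)}^2. \]
To bound the right-hand side by $A^2$, I will use the hypothesis \eqref{e:bessel} with a Rademacher average. Let $\{\varepsilon_m\}$ be i.i.d.\ symmetric $\pm 1$ random variables, and set $c_j := \varepsilon_{m(j)}$ where $m(j)$ is the unique index with $j \in I_{k,m}$. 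Then $c_j \in \{\pm 1\}$, so \eqref{e:bessel} gives $\|\sum_m \varepsilon_m B_{k,m}\|_{L^2(X)} \leq A$ almost surely; taking expectation and using independence of the Rademachers yields
\[ \sum_m \|B_{k,m}\|_{L^2(X)}^2 \;=\; \mathbb{E}\, \Bigl\|\sum_m \varepsilon_m B_{k,m}\Bigr\|_{L^2(X)}^2 \;\leq\; A^2. \]

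Combining the previous two displays gives $\|\max_m |B_{k,m}|\|_{L^2(X)} \leq A$ uniformly in $k$, and summing the pointwise domination in $k$ produces
\[ \Bigl\|\sup_{1< n\leq N} |S_n|\Bigr\|_{L^2(X)} \;\leq\; \sum_{k=0}^{\lceil \log_2(2+N)\rceil} A \;\lesssim\; A\log(2+N), \]
which is \eqref{e:RM}. There is no real obstacle here: the only mildly subtle step is recognizing that the Bessel-type hypothesis for $\pm 1$ coefficients is strong enough to produce the full orthogonality inequality $\sum_m \|B_{k,m}\|_2^2 \leq A^2$ via Rademacher averaging, but this is immediate once the random signs are introduced.
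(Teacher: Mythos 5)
Your proof is correct, and it is the standard Rademacher--Menshov argument: dyadic decomposition of partial sums, the pointwise bound $\max_m|B_{k,m}| \le (\sum_m |B_{k,m}|^2)^{1/2}$ at each scale, and Rademacher averaging to extract the near-orthogonality $\sum_m \|B_{k,m}\|_2^2 \le A^2$ from the signed-sum hypothesis. The paper does not supply a proof of this lemma; it simply cites Demeter--Tao--Thiele (Trans.\ AMS 2008, Thm.\ 10.6), whose argument is the same one you give, so your write-up is a valid and faithful filling-in of that reference.
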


\section{The Main Lemma} \label{s:proof}
The polynomials $ P (x,y)$ in Theorem~\ref{t:Tsparse} are general polynomials.  But, we can 
 without loss of generality assume that $ P$ does  not contain (a) constants,  (b) terms that are purely powers of $ x$, 
 nor (c) terms that are linear in $ y$.  
 That is, we can write 
\begin{equation*}
 P  (x,y) = \sum_{\substack{\alpha , \beta  \;:\; 2\leq \lvert  \alpha \rvert + \lvert  \beta \rvert \leq d\\    \lvert  \beta\rvert  \neq 0} } \lambda _{\alpha, \beta  } x ^{\alpha } y ^{\beta }. 
\end{equation*}
Define  $ \lVert P\rVert= \sum_{\alpha , \beta } \lvert  \lambda _{\alpha , \beta }\rvert $.
Observe: Any dilate of a Calder\'on-Zygmund kernel is again a Calder\'on-Zygmund kernel.  Therefore, in proving our sparse bounds, it suffices to do so for a polynomials satisfying $ \lVert P\rVert=1$.  We will do so using induction on degree. 
Notice that the induction hypothesis implies that the sparse bounds  hold without restriction on $ \lVert P\rVert$. These remarks are important to the proof.

The essential step is to show that the sparse bounds  of Theorem~\ref{t:Tsparse} holds in these cases.  

\begin{lemma}\label{l:cases} 

The operators $ T_{P,\ast } $ satisfy the sparse bounds \eqref{e:Tsparse} under either of  these assumptions.  

\begin{enumerate}
\item   The polynomial $ P (x,y) = P (y)$ is only a function of $ y$.  

\item  The polynomial $ P$ satisfies $ \lVert P\rVert = 1$, and the kernel $ K (y) $ of the operator $ T$ is supported 
on $ \lvert  y\rvert \geq  2 ^{t_n}  $, where $ t_n \lesssim 1$ is a dimensional constant.  
\end{enumerate}

\end{lemma}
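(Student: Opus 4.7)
The plan is to handle the two cases separately, with case (1) reducing largely to known technology and case (2) requiring the full strength of Lemma~\ref{l:same0}.

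For case (1), $T_P$ is translation-invariant, a convolution against $K_P(y) := e(P(y))K(y)$. I would use the dyadic decomposition $K = \sum_j K_j$ from \eqref{decomp} and classify scales: at scales $j$ with $\sum_\beta |\lambda_\beta| 2^{j|\beta|} \lesssim 1$, the phase $e(P(y))$ is essentially constant on the $2^j$-annulus, and these pieces combine into an honest Calder\'on-Zygmund kernel to which Theorem~\ref{t:CZ} applies. At the remaining ``oscillatory'' scales, Lemma~\ref{SW} applied in the $y$ variable yields rapid decay of $\|T_{P,j}\|_{2\to 2}$, say $\lesssim 2^{-\delta j}$ for some $\delta = \delta(d) > 0$; a Cotlar--Stein argument then gives a full $L^2$ bound and a pointwise sparse form via the standard $L^2$-improving route. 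Maximal truncations are dispatched by Lemma~\ref{l:RM}, with the resulting logarithmic factor absorbed by the geometric decay across scales.

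For case (2), write $T_{P,*} f(x) = \sup_{n \geq t_n}\bigl|\sum_{k=t_n}^n T_k f(x)\bigr|$ where $T_k$ is the operator with kernel $\phi_k$ from \eqref{e:phi}. The scheme is threefold: (i) establish geometric $L^2$ decay of $T_k$ via Lemma~\ref{l:same0} through a Cotlar--Stein $T^*T$ argument, using \eqref{e:same} to bound $\|T_k^* T_j\|_{2\to2}$ for $t_n \le j \le k$ (the main term is the $2^{-(n+\epsilon_d)k}$ piece, while the contribution from $Z_K$ is controlled by the measure bound \eqref{e:Z0}); (ii) prove an $L^1$-improving partial sparse bound by a Calder\'on--Zygmund decomposition of $f$ at a level tied to $\langle f\rangle_Q$, splitting the sum over scales $k$ relative to $\ell(Q)$ for each stopping cube $Q$; (iii) handle the $\sup_n$ by Lemma~\ref{l:RM}, where the $\log N$ loss is tolerable because a Carleson-type measure estimate limits the number of critical scales per cube to $O(1)$.

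The main obstacle is step (ii). The bad pieces $b_Q$ from the Calder\'on--Zygmund decomposition sit at a smaller scale than the oscillating kernel $\phi_k$, and because $P(x,y)$ genuinely depends on $x$, one cannot exploit $\int b_Q = 0$ alone against $e(P(x,y))K(y)$ in the translation-invariant manner familiar from Theorem~\ref{t:CZ}. The rescue is the fiber estimate \eqref{e:Z} of Lemma~\ref{l:same0}: on each $x$-fiber, $\pi_x Z_K$ has small Minkowski sum with $2^s$-balls, which lets one absorb the $x$-dependent oscillation into a quantitative gain by integrating first in $y$ at a fixed $x$. This fiber-wise control is the genuinely new oscillatory ingredient beyond the standard Ricci--Stein and Chanillo--Christ estimates, and is what unlocks the $L^1$ endpoint for maximal truncations.
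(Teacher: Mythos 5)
Your overall strategy tracks the paper's quite closely, and you correctly identify the crucial new ingredient: the fiber-wise estimate \eqref{e:Z} from Lemma~\ref{l:same0}, which is what rescues the $L^1$ endpoint precisely because the $x$-dependence of $P$ kills the usual cancellation against $\int b_Q = 0$. Your three-step scheme for case (2) --- $L^2$ decay from \eqref{e:same}, an $L^1$-improving partial sparse bound via Calder\'on--Zygmund decomposition, and Rademacher--Menshov controlled by a Carleson measure estimate --- is essentially the cascade the paper runs through Lemmas~\ref{l:2ndMain}, \ref{l:standard}, \ref{l:nonKnfty}, and \ref{l:2}. The one piece you elide is the standard stopping-time recursion that upgrades the \emph{restricted} sparse bound (valid under the hypothesis \eqref{e:K} that $f,g$ have controlled averages throughout $\mathcal I$) to a genuine sparse bound; the paper's own proof of Lemma~\ref{l:cases} is exactly this recursion, with the analytic content deferred to Lemma~\ref{l:2ndMain}. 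Also, the Carleson estimate \eqref{e:CM} does not reduce the number of scales per cube to $O(1)$ but rather to $O(2^t)$, after which the Rademacher--Menshov $\log$ loss $\sim t$ is defeated by the competing gain $2^{-t/3}$; a small imprecision, not a flaw.

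The genuine gap is in your treatment of case (1). You propose handling the oscillatory scales by a Cotlar--Stein $L^2$ argument and then passing to a ``pointwise sparse form via the standard $L^2$-improving route.'' That route yields at best a $(2,2)$ sparse bound, not the claimed $(1,r)$ with $r$ arbitrarily close to $1$: the $\langle f\rangle_{I_0}$ appearing in \eqref{e:2ndMain} is an $L^1$ average, and no amount of $L^2$ decay across scales will produce it without a Calder\'on--Zygmund decomposition of $f$. The paper avoids the issue by a simple unification: once the small scales $j < t_n$ are peeled off (they form an honest Calder\'on--Zygmund operator whose maximal truncations already satisfy a $(1,1)$ sparse bound), case (1) becomes literally a subcase of case (2), and the full machinery applies verbatim. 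Your case (2) argument \emph{would} prove case (1) once you make this reduction --- the defect is that, as written, your case (1) argument is a separate and weaker path. I would recommend folding case (1) into case (2) exactly as the paper does.
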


We take up the proof of the Lemma, returning to the conclusion of the proof our main Theorem~\ref{t:Tsparse} at the end of this section.  Now, in the case of $ P$ being only a polynomial of $ y$, the conclusion is invariant under dilations, so that we are free to assume that in this case $ \lVert P\rVert   =1 $.  
We need only concern ourselves with `large scales.'  
For any finite constant $ t_n$, the  operator defined below, using the notation \eqref{decomp}, 
\begin{equation*}
f \mapsto  \int e (P(y)) f (x-y)   \sum_{ j <  t_n } 2^{-jn} \psi_j(2^{-j} y) \; dy
\end{equation*}
is a Calder\'on-Zygmund operator, hence its maximal truncations are bounded on $ L ^{1}$ to weak $ L ^{1}$, with 
a norm bound that only depends upon the polynomial $ P $ through its degree.  

\smallskip 

Having removed that part of the kernel close to the origin, both cases in Lemma~\ref{l:cases} fall under the assumptions of case 2. 
The  maximal truncations  are at most 
\begin{equation} \label{e:M1}
\sup _{k_0 \geq C_d} 
\Bigl\lvert 
\sum_{k=k_0} ^{\infty } \int e (P(x,y)) 2^{-kn} \psi _k (2^{-k} y) f (x-y) \; dy 
\Bigr\rvert +M_{\textup{HL}} f =: \tilde T _{\ast } f + M_{\textup{HL}} f.
\end{equation}
On the right, the maximal function  admits a sparse bound of type (1,1),
so we show the sparse bound $ (1,r)$  operator $ \tilde T _{\ast } f $.

\smallskip 
We make a familiar dyadic reduction, using \emph{shifted dyadic grids}. 
For each $\vec{\omega} \in \{0,1/3,2/3\}^n$, let $\mathcal{D}^{\vec{\omega}}$ be the cubes in 
\[ \{ 2^k( [0,1]^n + \vec{m} + (-1)^k{\vec{\omega}}) : \vec{m} \in \mathbb{Z}^n , k \in \mathbb Z \},\]
that is the dyadic grid  $ \mathcal D$ shifted by ${\vec{\omega}}$.
It is well known that for any cube, $I$, there exists some ${\vec{\omega}} = {\vec{\omega}}(I) \in \{0,1/3,2/3\}^n$ and  some $P = P(I) \in \mathcal{D}^{\vec{\omega}}$, we have the containment $ I \subset P$, and $ \ell (P) \leq 6 \ell (I)$.  
Moreover, fixing the side length of a cube, we can resolve the identity function by 
\begin{equation} \label{e:=1}
\sum_{\vec \omega \in \{0,1/3,2/3\}^n } 
\sum_{I\in\mathcal{D}^{\vec{\omega}} \;:\; \ell (I) = 2 ^{k}}
\mathbf 1_{\frac 13 I } \equiv 1, \qquad k\in \mathbb Z . 
\end{equation}
Observe that we then have 
\begin{gather*}
\int e ( P(x,y)) 2^{-kn} \psi _k (2^{-k} (x-y) ) f (y)\; dy 
= 
\sum_{\vec \omega \in \{0,1/3,2/3\}^n } \sum_{\substack{I \in  \mathcal D ^{\vec \omega }  \\ \ell(I) = 2^{k+ t_n}} }
T _{I} f (x),
\\
\textup{where} \qquad 
T _{I} g (x) = \int e ( P(x,y)) 2^{-kn} \psi _k (2^{-k} (x-y) )  (g   \mathbf 1_{ \frac 13 I } ) (y)\; dy. 
\end{gather*}
\begin{align} 
\label{e:T8}
T _{\ast , \vec \omega } f &= \sup _{\epsilon  \geq 2^{C_d} } \Bigl\lvert  \sum_{I \in  \mathcal D ^{\vec \omega } \;:\; \ell(I) \geq \epsilon  } T_I f  \Bigr\rvert
\end{align}
The role of the grid  $ \mathcal{D}^{\vec{\omega}}$  in the remaining argument is of a   standard nature, and so we suppress $ \vec \omega $ in the notation in the argument to follow.

\smallskip 
We will freely decompose the collection of cubes $ \mathcal D_+ =  \{I \in  \mathcal D ^{\vec \omega } \;:\; \ell(I) \geq 2 ^{C_d}\}$.  Thus, extend the notation \eqref{e:T8} to 
\begin{equation} \label{e:TI}
T _{\ast ,  \mathcal I} f = \sup _{\epsilon  >0 } \Bigl\lvert  \sum_{I \in  \mathcal I \;:\; \ell(I) \geq \epsilon  } T_I f  \Bigr\rvert
\end{equation}
where $ \mathcal I \subset \mathcal D_+$.  Our second main Lemma is as below. As it forms the core of the proof, we place it's proof in the next section.

\begin{lemma}\label{l:2ndMain} Suppose that $ f, g $ are supported on cube $ I_0$, and  and $ \mathcal I$  is a collection of subcubes of $ I_0$ for which 
\begin{equation}\label{e:K}
\sup _{I\in \mathcal I} \langle f \rangle_I < A \langle f \rangle_{I_0}, 
\qquad 
\sup _{I\in \mathcal I} \langle g \rangle_I < A \langle g \rangle_{I_0}, 
\end{equation}
where $ A \lesssim 1$ is a constant.  Then, 
\begin{equation}\label{e:2ndMain}
\langle T _{\ast , \mathcal I} f, g \rangle \lesssim \tfrac 1 {r-1}\lvert  I_0\rvert   \langle f \rangle _{I_0} \langle g \rangle _{I_0,r}, 
\qquad 1< r \leq 2. 
\end{equation}
The implied constant depends upon $A$, and $ P (x,y)$ only through the degree of $ P$.  
\end{lemma}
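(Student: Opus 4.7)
The plan is to adapt the Calder\'{o}n--Zygmund decomposition to the oscillatory setting and then linearize the scale supremum in $T_{\ast , \mathcal I}$ via the Rademacher--Menshov estimate \eqref{e:RM}. By scaling $f$ and $g$ I may normalize $\langle f \rangle_{I_0} = \langle g \rangle_{I_0, r} = 1$, so that \eqref{e:K} reads $\sup_{I \in \mathcal I} ( \langle f \rangle_I + \langle g \rangle_I ) \lesssim 1$ and the task becomes $\langle T_{\ast , \mathcal I} f , g \rangle \lesssim (r-1)^{-1} \lvert I_0 \rvert$. Write the partial sums $S_\epsilon f = \sum_{I \in \mathcal I : \ell(I) \geq \epsilon} T_I f$ indexed by the scales $\ell(I) = 2^{k + t_n}$, so $T_{\ast , \mathcal I} f = \sup_\epsilon \lvert S_\epsilon f \rvert$, and aggregate the $T_I$'s sharing a common scale $k$ into an operator $T_k$.

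First I would perform a Calder\'{o}n--Zygmund decomposition of $f$ at a level $\lambda \gg 1$ to be optimized: $f = \mathfrak g + \mathfrak b$, with $\lVert \mathfrak g \rVert_\infty \lesssim \lambda$, $\mathfrak b = \sum_{Q \in \mathcal Q} b_Q$ over a pairwise disjoint family of subcubes of $I_0$ with $\sum_{Q} \lvert Q \rvert \lesssim \lambda^{-1} \lvert I_0 \rvert$, each $b_Q$ mean zero, supported in $Q$, and $\lVert b_Q \rVert_1 \lesssim \lambda \lvert Q \rvert$. For the good part, the oscillatory $T^* T$ estimate \eqref{e:same} of Lemma~\ref{l:same0}, together with the measure bound \eqref{e:Z0} on the exceptional set, yields $\lVert T_k^* T_j \rVert_{2 \to 2} \lesssim 2^{-c_d \lvert k - j \rvert}$ by Schur's test, so Cotlar--Stein gives uniform $L^2$ boundedness of $S_\epsilon$ as well as of any signed sum $\sum_I c_I T_I$. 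Lemma~\ref{l:RM} then bounds $\lVert T_{\ast , \mathcal I} \mathfrak g \rVert_2 \lesssim \log(2 + N) \, \lVert \mathfrak g \rVert_2 \lesssim \log(2 + N) \, \lambda^{1/2} \lvert I_0 \rvert^{1/2}$, where $N$ counts the scales that genuinely contribute; pairing with $g \mathbf 1_{I_0}$ via Cauchy--Schwarz and H\"{o}lder in the exponent $r$ produces a good-part contribution of size $\log(2 + N) \, \lambda^{1/2} \lvert I_0 \rvert$.

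For the bad part, on scales $2^k \gg \ell(Q)$ I would exploit the mean-zero property of $b_Q$ by writing $T_I b_Q (x) = \int [\phi_k(x, y) - \phi_k(x, y_Q)] b_Q(y) \, dy$. The kernel difference admits the naive gain $\ell(Q) / 2^k$ only on the complement of $Z_K$, since the phase $e(P(x,y))$ has $y$-derivative of size $2^k$, not bounded; on $Z_K$ itself the \emph{fiberwise} estimate \eqref{e:Z}---the essential refinement over \eqref{e:Z0} noted in the paper's roadmap---supplies a measure bound for each slice $\pi_x Z_K$ that pairs cleanly with $g$ after integrating in $x$. Scales $2^k \lesssim \ell(Q)$ are absorbed by the trivial bound $\lVert T_I b_Q \rVert_1 \lesssim \lVert b_Q \rVert_1$, summing across $\mathcal Q$ to at most $\lambda \sum_{Q} \lvert Q \rvert \lesssim \lvert I_0 \rvert$.

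The final step is to control $N$ by the Carleson-measure-type estimate announced as Lemma~\ref{l:2} in the paper's roadmap, yielding $\log N \lesssim \log\bigl((r-1)^{-1}\bigr)$ once extracted via H\"{o}lder from the gap between $\langle g \rangle_{I_0}$ and $\langle g \rangle_{I_0, r}$. Optimizing $\lambda$ to balance the good-part contribution $\log(2+N) \lambda^{1/2} \lvert I_0 \rvert$ against the bad-part contribution $\lvert I_0 \rvert$ then delivers \eqref{e:2ndMain} with the asserted constant $(r-1)^{-1}$. The principal obstacle I foresee is precisely the intertwining of the maximal truncation with the oscillation: a naive scale count returns a divergent $\log(\ell(I_0)/\epsilon)$, so the Carleson bound on genuinely active scales is indispensable; and the mean-zero cancellation for $b_Q$ cannot exploit only the smoothness of $\psi_k$, because the oscillatory phase contributes $y$-derivatives of order $2^k \lVert \nabla_y P \rVert$, forcing one to rely on the fiberwise measure bound \eqref{e:Z} to dispose of the set where cancellation fails.
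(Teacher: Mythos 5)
Your architecture differs from the paper's in several places, and two of the differences are fatal gaps rather than alternative routes.

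\textbf{The good part and the role of Rademacher--Menshov.} You propose to control the maximal truncation of $S_\epsilon\mathfrak{g}$ by applying Lemma~\ref{l:RM} to the full scale range, incurring a factor $\log(2+N)$ where $N$ is the number of scales, and to kill this by optimizing the CZ level $\lambda$. But $N\approx\log_2\ell(I_0)$ is unbounded: it does not depend on $r$, $\lambda$, or any of the structural parameters, so no Carleson-measure estimate can control it, and the proposed balance $\log(2+N)\lambda^{1/2}|I_0|\lesssim(r-1)^{-1}|I_0|$ forces $\lambda\lesssim(\log(2+N))^{-2}\ll 1$, contradicting the requirement $\lambda\gg 1$ for a sensible Calder\'on--Zygmund decomposition. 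The paper instead disposes of $\gamma$ in one line using Lemma~\ref{l:simple}, i.e.\ $\lVert T_{\ast,\mathcal I}\rVert_{L^q\to L^q}\lesssim q$ for $q\geq 2$ (itself from Cotlar--Stein plus interpolation), applied with $q=r'$. This already controls the full supremum with no scale count whatsoever, and the factor $q\simeq\frac{1}{r-1}$ is exactly the constant you want. Rademacher--Menshov appears only much later, inside the non-standard bad part (proof of Lemma~\ref{l:2}), where the relevant family of ``generations'' $\mathcal M_1,\dots,\mathcal M_{u_0}$ has $u_0\lesssim 2^t$ by the Carleson estimate \eqref{e:CM}, so the logarithmic loss $\sim t$ is harmless against the decay $2^{-t/3}$ from \eqref{e:st} and \eqref{e:Z}.

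\textbf{The bad part and cancellation.} You plan to exploit the mean-zero property of $b_Q$ via the kernel difference $\phi_k(x,y)-\phi_k(x,y_Q)$, claiming the naive gain $\ell(Q)/2^k$ ``off $Z_K$.'' This does not follow from what Lemma~\ref{l:same0} provides: the set $Z_K$ is defined through the decay of the $T^\ast T$ kernel, not through pointwise $y$-smoothness of the kernel of $T_K$ itself, so there is no statement available that $\phi_k(x,\cdot)$ is $2^{-k}$-Lipschitz off $Z_K$. Indeed the paper explicitly writes ``No cancellation property of $b$ is needed,'' precisely because the oscillatory phase makes that mechanism unusable. The substitute is the decomposition $\mathcal K=\mathcal S_s\cup\mathcal N_s$ by whether the naive $T^\ast T$ decay \eqref{e:standard} holds, and, on $\mathcal N_s$, direct estimation of $\|T_{K,s}b\|_2^2$ using the measure of a neighborhood of $Z_K$ \eqref{e:Z0}, followed by the fiberwise bound \eqref{e:Z} to estimate the cross terms $\langle\beta_u,\beta_v\rangle$. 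Your ``$2^k\lesssim\ell(Q)$'' scales also never occur: by the stopping condition \eqref{e:K} versus the definition of $\mathcal B$, every $K\in\mathcal I$ either misses a bad cube $J\in\mathcal B$ or strictly contains it, so $T_K b_s$ vanishes unless $2^s<\ell(K)$; an $L^1$ bound on those scales would in any case not control the pointwise supremum over $\epsilon$.

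In short, the inputs you name are the right ones, but you have Rademacher--Menshov doing the wrong job (it cannot see the good part), you rely on a cancellation the oscillation destroys, and you introduce a free parameter $\lambda$ the paper deliberately avoids; the paper's arrangement --- $L^q$ boundedness for the good part, $T^\ast T$ splitting plus Carleson plus RM for the bad part --- is what actually closes the argument.
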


\begin{proof}[Proof of Lemma~\ref{l:cases}]
Recall that we are to prove the sparse bound for $ T _{\ast , \mathcal D_+}$, as defined in \eqref{e:TI}. 
It suffices to consider bounded functions $ f, g$ supported on a fixed cube $ I_0 \in \mathcal D_+$.  
Now, it is easy to see that 
\begin{equation*}
\sum_{J \;:\; J\supset I_0}  \lvert  T_J f\rvert \mathbf 1_{I_0}   \lesssim \langle f \rangle _{I_0} . 
\end{equation*}
It suffices to prove the sparse bound for $ T _{\ast , \mathcal I_0}$, where $ \mathcal I_0$ consists of the dyadic cubes strictly contained in $ I_0$.  

Add the cube $ I_0$ to the sparse collection $ \mathcal S$.  
Take $ \mathcal E$ to be the maximal dyadic cubes $ P\subset I_0$ so that at least one of the following two inequalities hold: 
\begin{equation*}
\langle f \rangle _{P} > 100 \langle f \rangle _{ I_0}
\quad \textup{or} \quad 
\langle g \rangle _{P} > 100 \langle g \rangle _{ I_0}. 
\end{equation*}
Let $ E= \bigcup \{P \;:\; P\in \mathcal E\}$, so that $ \lvert  E\rvert \leq \frac 1 {50} \lvert  I_0\rvert  $. 
And let $ \mathcal I = \{I\in \mathcal I_0 \;:\;   I\not\subset E\}$. It follows that 
\begin{align*}
\langle T _{\ast , \mathcal I_0} f, g \rangle 
\leq \langle T _{\ast , \mathcal I} f, g \rangle + 
\sum_{P}\langle T _{\ast , \mathcal I_0 (P)} f, g \rangle , 
\end{align*}
where $ \mathcal I_0 (P) = \{I\in \mathcal I_0 \;:\; I\subset P\}$.  

The first term on the right is controlled by \eqref{e:2ndMain}. And, we add the collection $ \mathcal E $ to the sparse collection $ \mathcal S$, and recurse on the second group of terms above. This completes the proof of the sparse bound. 

\end{proof}

 To complete the proof of the Theorem~\ref{t:Tsparse}, we need to consider the case \emph{not covered} by Lemma~\ref{l:cases}, namely 
 
\begin{lemma}\label{l:lastCase}  The operator $ T _{P, \ast }$  satisfies the sparse bound inequalities \eqref{e:Tsparse}  under the assumptions that polynomial $ P$ satisfies $ \lVert P\rVert = 1$ and    the kernel $ K (y)$ of the operator $ T$ is supported on $ \lvert  y\rvert \leq   2 ^{t_n}  $.   
\end{lemma}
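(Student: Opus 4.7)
The plan is to induct on the degree $d$ of $P$. Because the reduction already in effect forces $|\alpha|+|\beta|\ge 2$ with $|\beta|\ge 1$, the case $d\le 1$ is vacuous; assume the sparse bound \eqref{e:Tsparse} holds for every polynomial of degree less than $d$. The broad strategy is to split $T_{P,\ast}$ pointwise into a tame piece dominated by the Hardy-Littlewood maximal function and a non-tame piece that, after rescaling, is subsumed either by Lemma~\ref{l:cases}(2) or by the inductive hypothesis on a polynomial of strictly smaller degree.

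First I would use the Calder\'on-Zygmund decomposition \eqref{decomp} to write $K=\sum_{j\le t_n}K_j$ and set $T_{P,j}f(x):=\int e(P(x,y))K_j(y)f(x-y)\,dy$. Using the zero mean of $K_j$ together with $P(x,0)=0$, I would split
\[
T_{P,j}f(x)=(K_j\ast f)(x)+\int K_j(y)\,[e(P(x,y))-1]\,f(x-y)\,dy,
\]
observing that $\sum_j(K_j\ast f)$ and its maximal truncation assemble into a classical Calder\'on-Zygmund operator, and so fall under Theorem~\ref{t:CZ}. For the remaining piece I would expand $P(x,y)=\sum_{|\beta|\ge 1}Q_\beta(x)y^\beta$ with $\deg Q_\beta\le d-|\beta|$, and for each $x$ introduce the critical scale
\[
j_*(x):=\sup\Bigl\{j\le t_n\;:\;\sum_\beta|Q_\beta(x)|\,2^{j|\beta|}\le 1\Bigr\}.
\]
At scales $j\le j_*(x)$ (the tame regime), Taylor expanding $e(P)-1=\sum_{m\ge 1}(2\pi iP)^m/m!$ and using $|y|\approx 2^j$ together with the definition of $j_*$ gives $\bigl|\int K_j\,[e(P)-1]f(x-\cdot)\bigr|\lesssim 2^{-\epsilon(j_*(x)-j)}M_{\textup{HL}}f(x)$ for some $\epsilon>0$; summing the geometric series in $j$ produces a pointwise majorization by $M_{\textup{HL}}f(x)$, whose sparse bound is classical.

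The non-tame scales $j_*(x)<j\le t_n$ call for heavier machinery. Rescaling $y\mapsto 2^jy'$ makes $\widetilde P(x,y'):=P(x,2^jy')$ satisfy $\|\widetilde P\|\ge 1$ in its $y'$ variables on the cube of interest, placing the rescaled operator in the ``large scale'' regime of Lemma~\ref{l:cases}(2); the measure of $\{x:j_*(x)=j\}$ is controlled by the sublevel estimate \eqref{e:level}. A stopping-cube argument in the style of Lemma~\ref{l:2ndMain}, combined with the Rademacher-Menshov inequality (Lemma~\ref{l:RM}) for the maximal supremum over the non-tame scales, should produce the sparse $(1,r)$ bound with the stated $(r-1)^{-1}$ blow-up. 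I expect the chief obstacle to be that $t_n-j_*(x)$ is unbounded in $x$, producing a potentially divergent logarithmic factor from Rademacher-Menshov; the remedy is to stratify $x$ by the value of $j_*(x)$ and balance the logarithmic loss against the sublevel measure bound, using the Chanillo-Christ fiber-wise bound \eqref{e:Z} together with Lemma~\ref{l:tech} to ensure that the inductive hypothesis applies to the rescaled polynomial without a degree-dependent loss.
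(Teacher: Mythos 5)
Your proposal misses the central idea of the paper's argument, which is a translation trick exploiting locality in $x$: because the kernel lives on $|y|\le 2^{t_n}$, one may restrict attention to $f$ supported on a single cube $m+I_0$ and, after translating by $m$, replace $P(x,y)$ by $P(m+x,y)$. The paper then writes $P(m+x,y) = R_m(x,y) + P(x,y)$ with $R_m(x,y):=P(m+x,y)-P(x,y)$, noting that $R_m$ has strictly smaller degree in the $x$-variable. An induction on $d_x$ (the degree in $x$, not total degree) handles $T_{R_m,\ast}$; and the difference $\bigl|e\bigl(P(m+x,y)\bigr)-e\bigl(R_m(x,y)\bigr)\bigr|=\bigl|e\bigl(P(x,y)\bigr)-1\bigr|\lesssim|y|$ for $x,y\in I_0$, which controls the remaining piece by $M_{\textup{HL}}f$. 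That is the whole proof: no critical scale, no stopping cubes, no Rademacher--Menshov. The base case $d_x=0$ is Lemma~\ref{l:cases}(1).

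By contrast, your critical-scale decomposition runs into a genuine gap exactly where you anticipate trouble. The scale $j_*(x)$ is inherently $x$-dependent, so in the non-tame regime $j_*(x) < j \le t_n$ the ``rescaled'' polynomial $\widetilde P(x,y')=P(x,2^{j}y')$ is not a single polynomial of controlled norm but a family indexed by $x$; Lemma~\ref{l:cases}(2) is an estimate for a fixed polynomial with $\|P\|=1$ and cannot be invoked fiberwise in $x$ without further argument. Nor does the sublevel estimate \eqref{e:level} directly control $|\{x:j_*(x)=j\}|$, since $j_*$ is defined through all coefficients $Q_\beta(x)$ simultaneously rather than through a single polynomial value. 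Finally, the proposed stopping-cube and Rademacher--Menshov machinery is imported from the proof of Lemma~\ref{l:2ndMain}, which is used to establish Lemma~\ref{l:cases}(2); redeploying it here is unnecessary and, as sketched, does not close. The difficulty you flag about $t_n-j_*(x)$ being unbounded is real and is precisely what the translation trick circumvents: after translating, only the \emph{lower-degree-in-$x$} part of the phase can be large, and that is handled by the inductive hypothesis rather than by any scale stratification.

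Your tame-scale computation (the geometric decay $2^{-(j_*(x)-j)}$ feeding into $M_{\textup{HL}}f$) is fine as far as it goes, but it covers only the easy part of the operator. I would set the critical-scale framework aside and instead identify the right inductive quantity ($d_x$) and the translation-difference decomposition.
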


\begin{proof}
We induct on the degree of the polynomial $ P (x,y)$ in the $ x$-coordinate, call it $ d_x$. 
The case of $ d_x=0$ is contained in the first case of Lemma~\ref{l:cases}, which we use as the base case. 

We pass to the inductive case of $ d_x >0$.  
Note that the induction hypothesis implies that we have the full strength of our main theorem for polynomials of degree $ d_x-1$ in the $ x$-coordinate.
Now, the the kernel $ K $ is supported on the  cube $ I_0 = [-2 ^{-t_n}, 2 ^{t_n}] ^{n} $, hence it suffices to prove the sparse bounds for functions $ f $ supported on a cube $ m+ I_0$, uniformly over $ m\in \mathbb Z ^{d}$.  
Equivalently, it is the same to prove the inequality functions supported on $ I_0$, uniformly over polynomials $ P (m+x,y) $, 
where $ P$ is a fixed polynomial of degree $ d_x$ in the $ x$-coordinate, and $ m\in  2 ^{t_n+1}\mathbb Z ^{n}$. 
Write 
\begin{align*}
R_m (x,y) = P (m+x,y) -  P (x, y). 
\end{align*}
This is a polynomial with degree in $ x$ at most $ d_x -1$. (In fact, for $ m=0$, it is the zero polynomial.)  Hence, $ T _{ R_m,\ast}$ satisfies the sparse bounds, uniformly in $ m\in \mathbb Z ^{n}$.

But, note that for $ x, y \in I_0$, 
\begin{equation*}
\lvert  e (P (m+x,y)) - e (R_m (x,y))\rvert = \lvert  e (P (x,y)) -1\rvert \lesssim \lvert  y\rvert,   
\end{equation*}
since $ \lVert P\rVert \leq 1$.   Therefore, we have 
\begin{equation*}
\lvert  T _{ P (m+ \cdot , \cdot ),\ast} - T _{ R_m,\ast} f (x)\rvert \lesssim M f .  
\end{equation*}
The maximal function also satisfies the $ (1,1)$ sparse bound, so the proof is complete. 
\end{proof}

\section{Proof of Lemma~\ref{l:2ndMain}} 
We begin with a Calder\'on-Zygmund decomposition.  
Let $ \mathcal B$ be the maximal subcubes  $J\subset I_0$ for which 
$
\langle f \rangle_J \geq  A  \langle f \rangle_{I_0} 
$.
Write $ f = \gamma + b$ where 
\begin{equation*}
b = \sum_{J\in \mathcal B} f   \mathbf 1_{J} = \sum_{s=0} ^{s_0}  \sum_{J\in \mathcal B_s} f\mathbf 1_{J} =: 
 \sum_{s=0} ^{s_0}  b_s. 
\end{equation*}
Above, we set $ \mathcal B_0 = \{ J\in \mathcal B \;:\; \ell (J) \leq 0 \}$, and for $ 0< s \leq s_0 = \log_2 \ell(I_0)-1$, set 
$ \mathcal B _s = \{ J\in \mathcal B \;:\; \ell (J) =s  \}$.  No cancellation property of $b$ is needed. 
And, as a matter of convenience, we set $ B_s \equiv 0$ if $ s <0$. 

The first step to to observe by Lemma~\ref{l:simple} below we have 
\begin{equation*}
\langle T _{\ast , \mathcal I} \gamma , g \rangle 
\lesssim \lVert T _{\ast , \mathcal I}  \;:\; L ^{r'} \to L ^{r'}\rVert \cdot \lVert \gamma  \rVert _{r'} \lVert g\rVert_r 
\lesssim \tfrac1 {r-1} \lvert  I_0\rvert \langle f \rangle _{I_0}  \langle g \rangle _{I_0, r}, 
\end{equation*}
 since $ \lVert \gamma \rVert _{\infty } \lesssim \langle f \rangle _{I_0}$.  Note in particular that we have written the 
 norms on $ f$ and $ g$ in sparse form.  

 It remains to consider the bad function.  Observe that if $ J\in \mathcal B$ and $ K\in \mathcal I$, 
 we have either $ J\cap K = \emptyset $ or $ J\subsetneq K $.  Therefore, we can write 
 \begin{align*}
T _{K} b &= \sum_{s  \;:\;   1\leq 2 ^{s} < \ell (K)} T _K b_s  
= \sum_{s =1} ^{ \infty } T_K b _{k-s}
\\
&=: \sum_{s =1} ^{ \infty } T_{K,s} b  , \qquad \ell (K) = 2 ^{k}.  
\end{align*}
We will consistently assume that $ \ell (K) = 2 ^{k}$ below.

For integers $ 0 \leq s \leq s_0$ we decompose $ \mathcal K  = \mathcal S_s \cup \mathcal N_s$ where $ K\in \mathcal S_s$ if 
$ 2 ^{s} < \ell (K)$ and there holds 
\begin{equation}\label{e:standard}
\lVert T_{K,s} b \rVert_2 ^2 < 100 C_0 \lvert  K\rvert ^{-(1+ \epsilon _d /n)} \lVert b_{k-s} \mathbf 1_{K}\rVert_1 ^2 ,  
\end{equation}
 where $ C_0$ is the  constant in \eqref{e:same}.    We refer to $ \mathcal S_s$ as the `standard' collection for which the second, simpler, term in \eqref{e:same} is decisive.  
 
\begin{lemma}\label{l:standard} 
We have these inequalities 
\begin{equation} 
\Bigl\lVert  \sup _{\epsilon } \Bigl\lvert  
\sum_{s \geq 0 \;:\; } \sum_{K \in \mathcal S_s \;:\; \ell (I) \geq \epsilon } T_{K,s} b 
\Bigr\rvert \Bigr\rVert_q \lesssim q \langle f \rangle_{I_0} \lvert  I_0\rvert ^{1/q}.  
\end{equation}
\end{lemma}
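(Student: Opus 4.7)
The plan is to bound the maximal truncation scale-by-scale, combining the $L^2$-improving estimate \eqref{e:standard} with a trivial $L^\infty$ bound, and then interpolating to reach $L^q$. With $\lambda := A\langle f\rangle_{I_0}$ denoting the CZ threshold (so $\|b\mathbf{1}_K\|_1 \lesssim \lambda|K|$ for every $K \subset I_0$, via the stopping-time control on parents), I would define for each scale $k$
\[ V_k := \sum_{s \geq 0}\;\sum_{K \in \mathcal{S}_s,\; \ell(K) = 2^k} T_{K,s}b \;=\; \sum_{K\,:\, \ell(K) = 2^k} T_K \tilde b_K,\qquad \tilde b_K := \sum_{s\,:\, K \in \mathcal{S}_s} b_{k-s}\mathbf{1}_K. \]
Since the maximal truncation in the statement is pointwise dominated by $\sum_{k \geq 0}|V_k|$, it suffices to bound $\sum_k\|V_k\|_q$. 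Because the families $\{\mathcal{B}_s\}_s$ are mutually disjoint, $\|\tilde b_K\|_1 \leq \|b\mathbf{1}_K\|_1 \lesssim \lambda|K|$, so \eqref{e:standard} summed via the triangle inequality in $s$ still yields $\|T_K\tilde b_K\|_2 \lesssim |K|^{-1/2-\epsilon_d/(2n)}\|b\mathbf{1}_K\|_1$ with the full $\epsilon_d$-improvement intact.

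Next, I would extract the trivial $L^\infty$ bound $\|T_K\tilde b_K\|_\infty \lesssim |K|^{-1}\|\tilde b_K\|_1 \lesssim \lambda$ from the uniform estimate $|\phi_k| \lesssim 2^{-nk}$ on the kernel. Interpolation $L^\infty \cap L^2 \to L^q$ gives
\[ \|T_K\tilde b_K\|_q^q \;\leq\; \|T_K\tilde b_K\|_\infty^{q-2}\|T_K\tilde b_K\|_2^{2} \;\lesssim\; \lambda^{q-2}|K|^{-1-\epsilon_d/n}\|b\mathbf{1}_K\|_1^{2}. \]
The outputs $T_K\tilde b_K$ at a fixed scale $2^k$ are supported in bounded dilates of their respective $K$ and hence have $O_n(1)$ overlap. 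Summing over $K$ and invoking the elementary bound $\sum_K\|b\mathbf{1}_K\|_1^2 \leq (\max_K\|b\mathbf{1}_K\|_1)\|b\|_1 \lesssim \lambda|K|\cdot\lambda|I_0|$, this produces $\|V_k\|_q \lesssim \lambda|I_0|^{1/q}\,2^{-k\epsilon_d/q}$ for $q \geq 2$. Geometric summation then yields $\sum_{k \geq 0}\|V_k\|_q \lesssim (q/\epsilon_d)\lambda|I_0|^{1/q}$, which is the claim; the range $q \in [1,2]$ is an immediate consequence of the $q=2$ estimate together with H\"older's inequality on the support of $V_k$.

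The main obstacle I anticipate is really just organizational: carefully verifying that \eqref{e:standard} transfers from the individual $T_{K,s}b$ to $T_K\tilde b_K$ with no loss in the $\epsilon_d$-improvement (this hinges on the disjointness of $\{\mathcal{B}_s\}_s$ across scales), and isolating the $L^\infty$ control of each $T_K\tilde b_K$ that fuels the interpolation. Notably, Rademacher--Menshov is \emph{not} required for this lemma: because the $\epsilon_d$-gain in \eqref{e:standard} already makes the geometric series in $k$ summable, the crude pointwise domination $\sup_\epsilon|\cdots| \leq \sum_k|V_k|$ suffices to produce the correct linear-in-$q$ dependence. The machinery of Lemma~\ref{l:RM} will become essential only for the complementary ``non-standard'' Lemma~\ref{l:2}, where precisely this $\epsilon_d$-improvement is unavailable.
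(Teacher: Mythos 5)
Your proposal is correct and follows essentially the same strategy as the paper: at each fixed scale $2^k$ one interpolates the trivial $L^\infty$ bound against the $L^2$-improving estimate \eqref{e:standard} (the latter fueled by the disjointness of $\{\mathcal B_s\}_s$ and of the cubes $K$), then sums the resulting geometric series in $k$ to produce the $q/\epsilon_d$ coefficient. The only cosmetic difference is that you package the sum over $s$ into a single function $\tilde b_K$ per cube and use the triangle inequality in $s$, whereas the paper keeps the $s$-sum outside and applies Cauchy--Schwarz over the (at most $j$) nonzero scales, introducing a harmless logarithmic factor that is then absorbed into the exponential gain; your observation that Rademacher--Menshov is not needed here also matches the paper.
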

 
\begin{proof}
We have a gain in the scale. Holding the side length of $ K$ fixed, it is clear that for any integer $ j$
\begin{equation*}
\Bigl\lVert  
\sum_{s \geq 0  } \sum_{K \in \mathcal S_s \;:\; \ell (K)  = 2 ^{j}}  T_{K,s}b
 \Bigr\rVert_ \infty \lesssim 1.    
\end{equation*}
And, in $ L ^2 $, we have 
\begin{align*}
\Bigl\lVert  
\sum_{s \geq 0 \;:\; } \sum_{K \in \mathcal S_s : \ell(K) = 2^j } T_{K,s}b
 \Bigr\rVert_ 2 
 & \leq j  \sum_{s=0} ^{j-1} \sum_{K \in \mathcal S_s \;:\; \ell (K)  = 2 ^{j}}  \lVert T_{K,s}b\rVert_2 ^2 
 \\
 & \lesssim j \sum_{s=0} ^{j-1} \sum_{K \in \mathcal S_s \;:\; \ell (K)  = 2 ^{j}} \lvert  K\rvert ^{-1-\epsilon_d/n} \lVert b_{k-s} \mathbf 1_{K}\rVert_1 ^2 
 \\ 
 & \lesssim j 2 ^{- \epsilon_d j}  \sum_{s}  \lVert b_s\rVert_1 \lesssim 2 ^{- \frac{j\epsilon_d}{2}} \lvert  I_0\rvert 
\end{align*}
where we have used \eqref{e:standard}.  Interpolating these two estimates gives us 
\begin{equation*}
\Bigl\lVert  
\sum_{s \geq 0 \;:\; } \sum_{K \in \mathcal S_s \;:\; \ell (K)  = 2 ^{j}} T_{K,s}b
 \Bigr\rVert_ q \lesssim 2 ^{- \epsilon_d j /2q} \lvert I_0\rvert ^{1/q}.  
\end{equation*}
Summing this over $ t_n \leq j \leq s_0$ completes the proof.  
\end{proof}

It therefore remains to consider the complementary `non-standard' collection $ \mathcal N_s$. Note that for it, we have 
\begin{equation}\label{e:non}
\lVert T_{K,s}b \rVert_2 ^2 < \frac {2 C_0 } {\lvert  I\rvert }\int _{Z_I} b_s (x) b_s (y)\; dx \, dy .  
\end{equation}
There is an elementary endpoint estimate.  

\begin{lemma}\label{l:nonKnfty}
Under the assumption that 
\[\sup_{K \subset I_0} \langle g \rangle_K \leq A \langle g \rangle_{I_0},\]
we have these inequalities  below, uniformly in $ s\geq 0$
\begin{equation}\label{e:nonInfty}
\Bigl\langle 
\sup _{\epsilon } \Bigl\lvert  
\sum_{K \in \mathcal N_s \;:\; \ell (K) \geq \epsilon } T _{K,s}b \Bigr\rvert  , g
\Bigr\rangle
\lesssim   \lvert  I_0\rvert 
\langle f \rangle_{I_0}   \langle g \rangle_{I_0} . 
\end{equation}
\end{lemma}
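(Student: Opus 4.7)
My plan for Lemma~\ref{l:nonKnfty} is to first exploit the fact that the hypothesis $\sup_{K \subset I_0} \langle g \rangle_K \leq A \langle g \rangle_{I_0}$ already forces $g$ to be pointwise bounded: by the Lebesgue differentiation theorem applied to $|g|$ (taking small cubes around each Lebesgue point), $|g(x)| \leq A \langle g \rangle_{I_0}$ for a.e.\ $x \in I_0$. Since $g$ is supported on $I_0$, this upgrades to $\|g\|_\infty \leq A \langle g \rangle_{I_0}$, and the desired pairing reduces to the $L^1$ estimate
\[
\Bigl\lVert \sup_{\epsilon} \Bigl\lvert \sum_{K \in \mathcal N_s,\, \ell(K) \geq \epsilon} T_{K,s} b \Bigr\rvert \Bigr\rVert_1 \lesssim |I_0| \langle f \rangle_{I_0}.
\]

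Next, I would control the maximal truncation crudely by the triangle inequality,
\[
\sup_{\epsilon} \Bigl\lvert \sum_{K :\, \ell(K) \geq \epsilon} T_{K,s}b \Bigr\rvert \leq \sum_{K \in \mathcal N_s} |T_{K,s}b|,
\]
and estimate each individual $L^1$ norm by Young's inequality. Since $|e(P(x,y))| = 1$ and the convolution kernel $2^{-kn}\psi_k(2^{-k}\,\cdot\,)$ has $L^1$ mass $\int|\psi_k| \lesssim 1$ (because $\psi_k$ is bounded and supported in $\{|u| \leq 1\}$), I obtain $\|T_{K,s}b\|_1 = \|T_K b_{k-s}\|_1 \lesssim \|b_{k-s} \mathbf 1_{\frac{1}{3}K}\|_1$ for $\ell(K) = 2^k$.

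The last step is to collect the $L^1$ contributions scale by scale. The shifted-dyadic cubes $\{\tfrac{1}{3}K : K \in \mathcal{D}^{\vec{\omega}},\, \ell(K) = 2^k\}$ are pairwise disjoint, so $\sum_{\ell(K) = 2^k} \|b_{k-s} \mathbf 1_{\frac{1}{3}K}\|_1 \leq \|b_{k-s}\|_1$. Reindexing $k' = k-s$ and summing over $k$ yields $\sum_{k' \geq 0} \|b_{k'}\|_1 = \|b\|_1 \leq \|f \mathbf 1_{I_0}\|_1 = |I_0|\langle f\rangle_{I_0}$, since the cubes in $\mathcal B$ are pairwise disjoint subsets of $I_0$. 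Pairing with $g$ via the $L^\infty$ bound completes the argument.

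There is essentially no hard step: this is the trivial $L^1$ endpoint, and the argument uses neither the non-standard structure of $\mathcal N_s$ (encoded in \eqref{e:non}) nor the fiberwise oscillatory decay \eqref{e:Z}. I expect the role of Lemma~\ref{l:nonKnfty} in the rest of the paper to be to interpolate against a more refined $L^2$-type estimate for $\sum T_{K,s}b$—one that \emph{does} exploit \eqref{e:non} and \eqref{e:Z} together with a Rademacher--Menshov step (Lemma~\ref{l:RM}) to handle the maximal truncation—so as to recover the sharp $1/(r-1)$ dependence asserted in Lemma~\ref{l:2ndMain}.
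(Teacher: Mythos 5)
Your argument is correct, and it establishes the lemma as literally stated; but the mechanism is different from the paper's. You extract $\lVert g\rVert_\infty \leq A\langle g\rangle_{I_0}$ via Lebesgue differentiation, then discard the supremum by the crude bound $\sup_\epsilon|\sum_{\ell(K)\geq\epsilon}T_{K,s}b| \leq \sum_K|T_{K,s}b|$, and finish with Young's inequality plus the disjointness of the cubes $\tfrac13K$ at each fixed scale. The paper instead linearizes the supremum with a measurable stopping time $\sigma(x)$, defines $\tilde T_K f = \mathbf 1_{\ell(K)\geq\sigma(x)}T_K f$, and dualizes: $\langle\sum_K\tilde T_K b_{k-s},g\rangle = \sum_K\langle b_{k-s},\tilde T_K^*g\rangle \lesssim \sum_K\lVert b_{k-s}\mathbf 1_K\rVert_1\langle g\rangle_K \lesssim |I_0|\langle f\rangle_{I_0}\langle g\rangle_{I_0}$. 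The two routes are about equally short, but the paper's has a virtue worth noticing: the dualized estimate only invokes the averages $\langle g\rangle_K$ for $K\in\mathcal N_s$, and this is exactly what the stopping-time construction inside Lemma~\ref{l:cases} actually delivers (the hypothesis \eqref{e:K} of Lemma~\ref{l:2ndMain} controls $\langle g\rangle_I$ only for $I\in\mathcal I$, not for arbitrary subcubes of $I_0$). Your $L^\infty$ reduction uses the hypothesis over \emph{all} $K\subset I_0$, which is how the lemma is phrased, but is slightly more than what the surrounding argument supplies; if you adopt your proof, it would be cleaner to observe that $\langle g\rangle_{\tfrac13K}\lesssim_n\langle g\rangle_K$ and pair each $\lVert T_{K,s}b\rVert_1$-contribution against the local average $\langle g\rangle_K$ rather than against $\lVert g\rVert_\infty$, so that only the averages over $K\in\mathcal N_s$ are needed. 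Your closing remarks on the role of the lemma (interpolation against the $L^2$ estimate of Lemma~\ref{l:2}, which is where \eqref{e:non}, \eqref{e:Z}, the Carleson measure bound, and the Rademacher--Menshov lemma actually earn their keep) are accurate.
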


\begin{proof}
We argue by duality. For measurable $ \sigma \;:\; I_0 \mapsto (0, \infty )$, we set 
\begin{equation*}
\tilde T_K f = \mathbf 1_{ \ell (K) \geq \sigma (x)} T_K  f , 
\end{equation*}
so that for arbitrary choice of $ \sigma $, we can estimate 
\begin{align*}
\Bigl\langle 
\sum_{K \in \mathcal N_s  } \tilde T_K b_{k-s}    , g
\Bigr\rangle
& = \sum_{K \in \mathcal N_s  }  \langle b_{k-s},  \tilde T_K ^{\ast} g\rangle 
 \lesssim  \sum_{K \in \mathcal N_s  }  \lVert b_{k-s} \mathbf 1_K\rVert_1 \langle g \rangle_K \lesssim \lvert  I_0\rvert \langle f \rangle_{I_0} \langle g \rangle_{I_0}.   
\end{align*}
\end{proof}

The essence of the argument is therefore the $ L ^2 $ bound below. 
\begin{lemma}\label{l:2}
We have these inequalities  below, uniformly in $ s\geq 0$, for a choice of $ \eta = \eta(n,d)>0$. 
\begin{equation}\label{e:2}
\Bigl\lVert  \sup _{\epsilon } \Bigl\lvert  
\sum_{K \in \mathcal N_s \;:\; \ell (K) \geq \epsilon } T_{K,s}b 
\Bigr\rvert \,\Bigr\rVert_2 \lesssim  2 ^{- \eta s} \langle f \rangle_{I_0} \lvert  I_0\rvert ^{1/2}.
\end{equation}
\end{lemma}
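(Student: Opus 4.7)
I will first establish the Bessel-type bound
\begin{equation*}
\Bigl\| \sum_{k \geq s+1} c_k \sum_{\substack{K \in \mathcal N_s \\ \ell(K)=2^k}} T_{K,s}\,b \Bigr\|_2 \lesssim 2^{-\eta_0 s}\,\langle f\rangle_{I_0}\,|I_0|^{1/2}, \qquad c_k \in \{0,\pm 1\},
\end{equation*}
and then upgrade it to the maximal estimate \eqref{e:2} via Rademacher--Menshov (Lemma~\ref{l:RM}). The logarithmic loss inherent in Rademacher--Menshov will be offset by a Carleson measure estimate for $\mathcal N_s$, which keeps the number of effectively contributing scales under control on any given region.

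\textbf{Single-scale bound.}  At a fixed scale $k>s$, the cubes $K \in \mathcal N_s$ with $\ell(K)=2^k$ are pairwise disjoint in the shifted grid, and the outputs $T_{K,s}b$ are essentially $K$-localized (the $\psi_k$-tail spreads by only one scale). Starting from the non-standard bound \eqref{e:non} and using that $b_{k-s}$ is a sum of pieces supported on CZ cubes of side $2^{k-s}$ with $\langle |f|\rangle_J \lesssim \langle f\rangle_{I_0}$, only CZ cubes meeting the $2^{k-s}$-fattening of the fibers of $Z_K$ actually contribute to the integral in \eqref{e:non}. The fiber-wise estimate \eqref{e:Z}, applied with parameter $k-s$, gives $|\pi_x Z_K + B(0,2^{k-s})|\lesssim |K|(2^{-\epsilon_d k}+2^{-s})$, so
\begin{equation*}
\Bigl\|\sum_{\substack{K \in \mathcal N_s \\ \ell(K)=2^k}} T_{K,s}b\Bigr\|_2^2 \lesssim (2^{-\epsilon_d k} + 2^{-s})\,\langle f\rangle_{I_0}\,\|b_{k-s}\|_1.
\end{equation*}
Summing this diagonal estimate over $k$ already yields the desired $2^{-\eta_0 s}\langle f\rangle_{I_0}^2|I_0|$, using $\|b\|_1 \lesssim \langle f\rangle_{I_0}|I_0|$ and the change of variable $j=k-s$ for the first term.

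\textbf{Off-diagonal orthogonality.} For $\ell(K)=2^k > 2^{k'} = \ell(K')$, the inner product $\langle T_{K,s}b, T_{K',s}b\rangle$ is controlled by the $T^*T$-estimate \eqref{e:same} of Lemma~\ref{l:same0}: off $Z_K$ the kernel of $\phi_k^*\phi_{k'}$ is of size $2^{-(n+\epsilon_d)k}$, giving the geometric improvement needed for summability in the scale difference; the piece on $Z_K$ is again absorbed by the fiber-wise argument above. Together with the diagonal bound, this gives the Bessel-type inequality with arbitrary signed coefficients.

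\textbf{Carleson, Rademacher--Menshov, and main obstacle.} To pass from the Bessel inequality to the supremum, I would prove a Carleson-type estimate
\begin{equation*}
\sum_{\substack{K \in \mathcal N_s \\ K \subset Q}} |K| \lesssim 2^{-\eta_2 s}\,|Q|, \qquad Q \in \mathcal D_+,
\end{equation*}
obtained by testing the defining inequality of $\mathcal N_s$ against $\mathbf 1_Q$ and once more invoking \eqref{e:Z}. A standard stopping-time decomposition of $I_0$ then ensures that on each stopping piece only $O(1)$ scales are genuinely active, so that the $\log(2+N)$ factor produced by Lemma~\ref{l:RM} is absorbed into the gain (possibly with a slightly smaller $\eta$). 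The hard step will be establishing this Carleson bound with the correct exponential $s$-dependence: it relies crucially on the \emph{fiber-wise} smallness of $Z_K$ in \eqref{e:Z}, rather than its weaker unconditional form \eqref{e:Z0}, since only the fiber version is sensitive to the scale $2^{k-s}$ of the bad function $b_{k-s}$, and this sensitivity is precisely what feeds the exponential gain in $s$.
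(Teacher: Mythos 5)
Your outline — diagonal $T^\ast T$ estimate via the fiber-wise smallness of $Z_K$, off-diagonal decay from \eqref{e:same}, a Bessel inequality upgraded to a maximal estimate by Rademacher--Menshov, a Carleson estimate to tame the $\log N$ loss — is the right shape of the argument, and you correctly identify \eqref{e:Z} as the crucial ingredient. The single-scale bound you state is correct and indeed slightly sharper than what the paper writes at \eqref{e:beta0}. But the step you flag as "the hard step" — the Carleson estimate
\begin{equation*}
\sum_{K \in \mathcal N_s,\, K \subset Q} |K| \lesssim 2^{-\eta_2 s}|Q|
\end{equation*}
— is not merely hard, it is false for the collection $\mathcal N_s$ as defined, and there is no fix by "testing the defining inequality against $\mathbf 1_Q$." The definition of $\mathcal N_s$ is the failure of \eqref{e:standard}, and that failure places no lower bound on $\langle b_{k-s}\rangle_K$: any $K$ with $\ell(K)>2^s$ and $b_{k-s}\mathbf 1_K \equiv 0$ (or negligibly small) belongs to $\mathcal N_s$ trivially, so the stacking $\sum_{K\in\mathcal N_s, K\subset Q}\mathbf 1_K$ can be of order $\log(\ell(Q)/2^s)$ on a set of full measure. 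Consequently your stopping-time heuristic ("only $O(1)$ scales active") does not follow, and Rademacher--Menshov then produces a $\log\log\ell(I_0)$ factor that is not uniform in $I_0$.

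The paper circumvents exactly this obstruction by introducing a secondary density parameter $t$: the subcollection $\mathcal N_{s,t}$ of cubes with $\langle b_{k-s}\rangle_K \approx 2^{-t}$ satisfies the elementary Carleson estimate $\sum_{K \in \mathcal N_{s,t}, K\subset J}|K| \lesssim 2^t|J|$ (from $\sum_t\langle b_t\rangle_J \lesssim 1$), giving at most $\approx 2^t$ generations after a stopping-time; the Bessel inequality is proved with gain $2^{-t/3}$, so the $\log(2^t)\approx t$ loss from Rademacher--Menshov is absorbed when $t\gtrsim \eta s$, while for $t\lesssim \eta s$ the contribution is estimated directly via Cauchy--Schwarz using $\|T_{K,s}b\|_2^2\lesssim 2^{-\epsilon_d s}|K|$. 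The exponential gain in $s$ then emerges from combining these two regimes, not from a Carleson estimate in $s$ alone. This two-parameter slicing is the missing ingredient in your proposal; the Bessel and off-diagonal estimates you sketch are otherwise on the right track and align with the paper's \eqref{e:beta}--\eqref{e:uv1}.

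Two smaller points. First, the paper's Bessel inequality is indexed by the generation index $u$ of a tree decomposition $\mathcal M_u$ inside $\mathcal N_{s,t}^\sharp$, not directly by the scale $k$; this is what ties the number of summands to the Carleson constant $2^t$. Indexing by scales $k$, as you propose, gives $s_0 - s$ summands with no a priori bound. Second, the paper's Carleson estimate is elementary ($L^1$ mass of $b$) and does not invoke \eqref{e:Z}; the fiber-wise estimate enters only in the $L^2$ computations, both diagonal and off-diagonal.
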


Interpolating between \eqref{e:nonInfty} and \eqref{e:2}, we have 
\begin{equation*}
\Bigl\langle 
\sup _{\epsilon } \Bigl\lvert  
\sum_{K \in \mathcal N_s \;:\; \ell (K) \geq \epsilon } T _{K,s}b \Bigr\rvert , g
\Bigr\rangle
\lesssim  2 ^{-s\eta/q}\lvert _0 \rvert  ^{1/q}
\langle f \rangle_{I_0}   \langle g \rangle_{I_0,q'} , \qquad s\geq 0. 
\end{equation*}
Summing this over $ s\geq 0$ completes the proof, with a single power of $ q \simeq  \frac 1 {r-1}$ as the leading coefficient in \eqref{e:2ndMain}.

\begin{proof}
There is an important calculation, relating the $L^2$ norm of $ T _{K,s}b$ to $s$.  The argument below is illustrated in Figure \ref{f:cover}.  
\begin{align} 
 \lVert T _{K,s}b\rVert_2 ^2  &
  \lesssim   \lvert  K\rvert ^{-1} \int _{Z _{K }} b_{k-s} (x) b _{k-s} (y) \; dx \, dy 
&(\ell (K)=2^k) 
 \\
 &\lesssim 
 \lvert  K\rvert ^{-1}  
\sum_{\substack{I, J \in \mathcal B _{k-s}\\ I , J \subset K }}   
\lvert I \times J\rvert \mathbf 1_{ I \times J\cap Z_K \neq \emptyset }  
& \Bigl(b _{k-s} = \sum_{I\in \mathcal B _{k-s} } b \mathbf 1_{I} \Bigr)
\\  
\noalign{\noindent here, we have used \eqref{e:st} twice.  Crucially, we  have \eqref{e:Z}, and so we can continue } 
& \lesssim   \lvert  K\rvert ^{-1}   |Z_K  + \{ (x,y)  \;:\;   \lvert  (x,y)\rvert  \leq C2^{k-s} \}|  
 \\\label{e:beta0} 
 & \lesssim   (2 ^{- \epsilon _d  k} + 2 ^{-s}) \lvert  K\rvert  .  
 \end{align}
 At the end of the arugment, we will again need this argument, but done fiberwise, using the full strength of \eqref{e:Z}. 

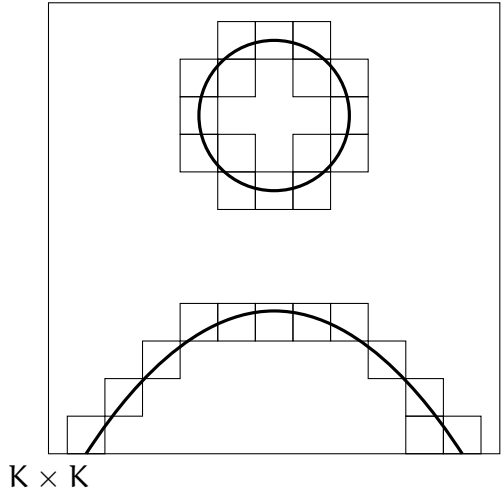
\begin{figure}
\begin{tikzpicture}
\draw[very thick]  (0, 1.5) circle (1cm); 
\draw[very thick]  (-2.5,-3) parabola bend (0,-1.1) (2.5,-3); 
\draw  (3,3) rectangle  (-3,-3) node [right,below] { $ K \times K$};  
 \foreach \a/\b/\c/\d in {-2.75/-3/ -2.25/ -2.5,  -2.25/-2.5/-1.75/-2 , -1.75/-2/-1.25/-1.5,  -1.25/-1.5/-.75/-1, -.75/-1.5/-.25/-1,
 -.25/-1.5/.25/-1, .25/-1.5/.75/-1,.75/-1.5/1.25/-1, 1.25/-1.5/1.75/-2 ,1.75/-2/2.25/-2.5 ,1.75/-2.5/2.25/-3, 2.25/-2.5/2.75/-3   ,
 -.25/2.25/.25/2.75,   -.25/.25/.25/.75,   .25/2.25/.75/2.75,   .25/.25/.75/.75,  -.25/2.25/-.75/2.75,   -.25/.25/-.75/.75,  
 -.25/2.25/-.75/1.75,   -.25/.75/-.75/1.25,   .25/2.25/.75/1.75,   .25/.75/.75/1.25, 
  -.75/2.25/-1.25/1.75,   -.75/.75/-1.25/1.25,   .75/2.25/1.25/1.75,   .75/.75/1.25/1.25,  .75/1.25/1.25/1.75 , -.75/1.25/-1.25/1.75 
 }   \draw   (\a,\b) rectangle  (\c,\d);  
\end{tikzpicture}
\caption{
The large square is $ K \times K$, with the set $  \{ (x,y) \;:\; x-y \in Z_K\}$ indicated by the two thick curves above. The set $ Z_K$ is then covered by rectangles of the form $ I \times J$ for $  I,J\in \mathcal B _{k,s}$, and the function $ b \times b$ has integral $ 2 ^{-2t }  \lvert  I \rvert \cdot \lvert  J\rvert  $ on each rectangle $ I \times J$.   
}
\label{f:cover}
\end{figure}

Make this secondary division of $ \mathcal N_s$. For integers $ t\geq 0$ set $ K\in \mathcal N _{s,t}$ if 
\begin{equation} \label{e:st}
 K_0 2 ^{-t} \leq \langle b _{k-s} \rangle_{K} < K_0 2 ^{-t+1}, \qquad  \ell (K) = 2 ^{k}. 
\end{equation}

The key additional property that we have is this Carleson measure estimate: 
\begin{equation}\label{e:CM}
\sum_{K \in \mathcal N _{s,t} \;:\; K\subset J} \lvert  K\rvert \lesssim 2 ^{t} \lvert  J\rvert  
\end{equation}
for all dyadic $ K\subset I_0$ with $K\in \mathcal{N}_{s,t}$.  Indeed, we necessarily have $ \sum_{t\geq 0 \,:\, 2^t\leq \lvert J\rvert } \langle b_t \rangle_J \lesssim 1$, so that 
\begin{align*}
\sum_{K \in \mathcal N _{s,t} \;:\; K\subset J} \lvert  K\rvert
& \lesssim 2 ^{t} 
 \sum_{t \geq 0 \,:\, 2^t\leq \lvert J\rvert }  \int _{J} b_t \lesssim 2 ^{t} \lvert  J\rvert.   
\end{align*}

The first order of business is to discard the scales and locations where the bad functions have ``large'' averages, relative to the parameter $s$:
we show that for each $t \leq \eta s$,
\begin{equation}\label{e:30} \Bigl\| \sup_\epsilon \Bigl| \sum_{ \ell(K) \geq \epsilon, K \in \mathcal{N}_{s,t}} T_{K,s} b \Bigr| \Bigr\|_2 \lesssim 2^{-\eta's }|I_0|^{1/2}.
\end{equation}

The Carleson measure condition \eqref{e:CM} implies that the set 
\begin{equation} 
E = 
\Bigl\{ \sum_{I\in \mathcal N _{s,t}} \mathbf 1_{I}  > C 2 ^{t}\Bigr\} 
\end{equation}
has measure at most $ \frac 14 \lvert  I_0\rvert $, for appropriate constant $ C$.  
This permits a further modification of \eqref{e:30}, namely we restrict the sum to 
$ \mathcal N _{s,t} ^{\sharp} := \{I \in \mathcal N _{s,t} \;:\; I\not\subset E\}$, and show 
\begin{equation}\label{e:40}
\Bigl\| \sup_\epsilon \Bigl| \sum_{ \ell(K) \geq \epsilon, K \in \mathcal{N}^{\sharp}_{s,t}} T_{K,s} b \Bigr| \Bigr\|_2 \lesssim 2^{-\eta's }|I_0|^{1/2}.
\end{equation}
This proves \eqref{e:30} upon a straight forward recursion inside the set $ E$.

But now, taking into account that $t \leq \eta s$, \eqref{e:40} now follows from an application of Cauchy-Schwarz, and the observation that for each $K \in \mathcal{N}_s$, $\|T_{K,s} b\|_2^2 \lesssim 2^{-\epsilon_d s} |K|$, since each such $K$ has $\ell(K) \geq 2^s$. The upshot is that we can proceed under the assumptions $t \geq  \eta  s$.

\smallskip

We now claim a strengthened version of \eqref{e:2}, namely 
\begin{equation}\label{e:3}
\Bigl\lVert  \sup _{\epsilon } \Bigl\lvert  
\sum_{K \in \mathcal N_{s,t}^{\sharp}  \;:\; \ell (I) \geq \epsilon } T_{K,s} b 
\Bigr\rvert \,\Bigr\rVert_2 \lesssim  2 ^{-t/3} \langle f \rangle_{I_0} \lvert  I_0\rvert ^{1/2}, \qquad s,t\geq 0. 
\end{equation}
which leads to the same conclusion for the full set $\mathcal{N}_{s,t}$ as above.

\smallskip 

The essence of this reduction is that it places  the Rademacher-Menshov inequality \eqref{e:RM} at our disposal.  Namely, after proving an appropriate orthogonality condition, an instance of \eqref{e:bessel}, we can conclude a result for maximal truncations from \eqref{e:RM}. 
Let $ \mathcal M _1$ be the minimal elements of $ \mathcal N ^{\sharp} := \bigcup _{s=0} ^{s_0} \mathcal N _{s,t} ^{\sharp}$, 
and inductively set $ \mathcal M _{u+1} $ to be the maximal elements of $ \mathcal N ^{\sharp} \setminus \bigcup _{v=1} ^{u} \mathcal M_v $.  Note that this set is empty for $ u+1 \geq u_0 =C 2 ^{t}$.  
Then, set 
$
\beta _u := \sum_{K\in \mathcal M_u} T  _{K,s}b  
$.
The required orthogonality statement is this. For any choice of constants $ \varepsilon _t \in \{-1,0,1\}$, there holds 
\begin{equation}\label{e:beta}
\Bigl\lVert \sum_{u=1} ^{u_0} \varepsilon _u \beta _u  \Bigr\rVert_2 
\lesssim  2 ^{- t/3}  \lvert  I_0\rvert ^{1/2} , \qquad  t\geq \eta s.    
\end{equation}
This is the hypothesis \eqref{e:bessel} of the Rademacher-Menshov lemma, so we conclude that 
\begin{equation*}
\Bigl\lVert \sup _{v}  \Bigl\lvert 
\sum_{u=1} ^{v} \varepsilon _u\beta _u 
\Bigr\rvert\Bigr\rVert_2 
\lesssim   t  2 ^{-t/3} \lvert  I_0\rvert ^{1/2}. 
\end{equation*}
And this implies \eqref{e:3}.

First, observe that   
\begin{align} 
\sum_{u=1}^{C2^t} 
\lVert \beta _u \rVert_2 ^2 
& = \sum_{K\in  \mathcal N_{s,t}}  \lVert T _{K,s}b\rVert_2 ^2  
 \lesssim  \sum_{K\in  \mathcal N_{s,t}} \langle b_{k-s}\rangle_K^2   \lvert  K\rvert 
\\  \label{e:beta3}
& \lesssim  2 ^{-t}  \sum_{K\in  \mathcal N_{s,t}} \int_K  b_{k-s} \;dx \lesssim 2^{-t} \lvert I_0\rvert. 
\end{align}

It remains to consider $ u < v$, and the inner product 
\begin{align} \label{e:uv}
\langle \beta _u , \beta _v  \rangle 
& = \sum_{J\in \mathcal M_u}   \sum_{\substack{K\in \mathcal M_v \\ J\subset K }} 
\langle  b _{j -s}  , T _J ^{\ast} T _{K} b _{k-s} \rangle  & (\ell J = 2 ^{j},\ \ell K = 2 ^{k}) . 
\end{align}
 The kernel of $T _J ^{\ast} T _{K}  $ is controlled by \eqref{e:same}. There are two terms on the right in \eqref{e:same}, 
and for the second we have 
\begin{align} 
\sum_{J\in \mathcal M_u}   \sum_{\substack{K\in \mathcal M_v \\ J\subset K }} 
\lvert  K\rvert ^{-1- \epsilon _d/n}   \int _{K} b _{k-s}  \cdot \int _{J} b _{j-s} 
& \lesssim 
2 ^{-2t}\sum_{J\in \mathcal M_u}   \sum_{\substack{K\in \mathcal M_v \\ J\subset K }} 
\lvert  K\rvert ^{-1- \epsilon _d/n}    \lvert  K \rvert \cdot \lvert  J\rvert
\\\label{e:uv1}
& \lesssim 2 ^{-2t- \epsilon _d  v} \lvert  I_0\rvert.     
\end{align}
The depends upon \eqref{e:st}, and the fact that $ K \in \mathcal M_v $ implies $ \ell (K) \geq 2 ^{v}$.

The first term on the right in \eqref{e:same} is the essential oscillatory term.  
It involves the set $ Z_K$, and our initial estimate is as below, fixing the interval $K$. 
\begin{align} \label{e:ZT}
 \sum_{\substack{J\in \mathcal M_u\\ J\subset K}} 
\langle  b _{j -s}  , T _J ^{\ast} T _{K} b _{k-s} \rangle  
 &\lesssim 
 \lvert  K\rvert ^{-1}  \sum_{\substack{J\in \mathcal M_u\\ J\subset K}} 
\int_K   b _{k-s} (x) \int _{J} b _{j-s}(y) 
 \mathbf 1_{Z _{K} } (x,y) \;dy \, dx 
\end{align}  
Fix $x$ above, and recall that $\pi_x Z$ is the fiber of $Z$ over $x$. 
The integral in $y$ is over the set $\pi_x Z_K $, and the integral is 
\begin{align}
\sum_{\substack{J\in \mathcal M_u\\ J\subset K}} 
 \int _{J \cap \pi_x Z_K } b _{j-s}(y) \;dy & \lesssim 
 2^{-t} \vert \pi_x Z_k + \{ |y| \lesssim 2^{k - v+u} \} \vert
 \\ \label{e:ZTT}
 &\lesssim 2^{-t}\lvert K\rvert (2^{- \epsilon_s v} + 2 ^{-v+u}). 
\end{align}
Above, we appeal to the fact that $\lvert J\rvert \leq 2 ^{-v+u}\lvert K \rvert$, and \eqref{e:st}, and the condition \eqref{e:Neighborhood}, which is an estimate uniform over all fibers.  
It follows that we have 
 \begin{align}
 \eqref{e:ZT} 
 & \lesssim 2 ^{-2t }  (2 ^{- \epsilon _d v}  + 2^{-s- \lvert  u-v\rvert }) 
 \lvert  K\rvert.  
\end{align}
Sum this estimate  over $K\in \mathcal{M}_v$, to conclude the bound 
\begin{equation}
\lvert \langle \beta _u , \beta _v  \rangle \rvert 
\lesssim 
2^{-2t - \epsilon_d \lvert u-v\rvert} \lvert I_0\rvert . 
\end{equation}
Combine this with  \eqref{e:beta3} easily prove \eqref{e:beta}, completing the proof of our Lemma.

\end{proof}

\begin{lemma}\label{l:simple} For any collection $ \mathcal I \subset \mathcal D_+$ we have 
\begin{equation}\label{e:simple}
 \lVert T _{\ast , \mathcal I}  \;:\; L ^{q} \to L ^{q        }\rVert \lesssim q , \qquad 2\leq q < \infty . 
\end{equation}
\end{lemma}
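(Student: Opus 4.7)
The plan is to group the operators $T_I$ by scale $\ell(I) = 2^k$, setting $S_k = \sum_{I \in \mathcal{I},\ \ell(I) = 2^k} T_I$ for $k \geq C_d$, and to establish a scalewise decay of the form $\|S_k\|_{q \to q} \lesssim 2^{-\epsilon_d k/q}$ for $q \geq 2$. The maximal truncation is then absorbed by the crude triangle inequality
\[ T_{*,\mathcal{I}} f(x) \leq \sum_{k \geq C_d} |S_k f(x)|, \]
so Minkowski and a geometric sum give $\|T_{*,\mathcal{I}} f\|_q \leq \|f\|_q \sum_{k \geq 0} 2^{-\epsilon_d k/q} \lesssim q \|f\|_q$. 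Note that because the per-scale bound is exponentially small, there is no need for a Rademacher--Menshov argument to control the supremum over truncations: the sum of the scale pieces already converges absolutely in $L^q$.

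To reduce to the full scale-$k$ operator, I would use the resolution of identity \eqref{e:=1}. Setting $E_k = \bigcup_{I \in \mathcal{I},\ \ell(I) = 2^k} \tfrac{1}{3} I$, the disjointness of these cubes (within a single shifted grid) gives $S_k f = T_k(f \mathbf{1}_{E_k})$, where $T_k$ is the scale-$k$ oscillatory integral operator with kernel $2^{-nk} e(P(x,y)) \psi_k(2^{-k}(x-y))$. Hence $\|S_k f\|_q \leq \|T_k\|_{q \to q} \|f\|_q$, and it suffices to bound the operator norm of $T_k$.

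The key step---and the main obstacle---is the $L^2$ estimate $\|T_k\|_{2 \to 2} \lesssim 2^{-\epsilon_d k/2}$. This comes from a $T_k^* T_k$ argument: the kernel of $T_k^* T_k$ is exactly the oscillatory convolution studied in \eqref{e:same} (with $j = k$), which Lemma \ref{l:same0} bounds locally on each cube $K$ of side $2^k$ by $C_0(2^{-nk} \mathbf{1}_{Z_K} + 2^{-(n+\epsilon_d)k})$. Schur's test, combined with the fiberwise neighborhood bound \eqref{e:Z} at $s = 0$ (which gives $|\pi_x Z_K| \lesssim 2^{nk} \cdot 2^{-\epsilon_d k}$), yields
\[ \sup_x \int |\text{kernel of } T_k^* T_k|(x,y)\, dy \lesssim 2^{-\epsilon_d k}, \]
and hence $\|T_k\|_{2 \to 2}^2 \lesssim 2^{-\epsilon_d k}$. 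This is where the full strength of Lemma \ref{l:same0}---in particular the refined fiberwise estimate rather than just the total measure of $Z_K$---is used.

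The $L^\infty$ bound is trivial: $\|T_k\|_{\infty \to \infty} \leq \|\psi_k\|_1 \lesssim 1$, since $|e(P)| = 1$. Riesz--Thorin interpolation then gives $\|T_k\|_{q \to q} \lesssim 2^{-\epsilon_d k/q}$ for $2 \leq q \leq \infty$. Summing,
\[ \sum_{k \geq C_d} 2^{-\epsilon_d k/q} \lesssim \frac{1}{1 - 2^{-\epsilon_d/q}} \lesssim \frac{q}{\epsilon_d} \lesssim q, \]
which, together with the previous display, produces the claimed linear-in-$q$ bound. Once the $L^2$ decay of the single-scale operator is established, the remainder is routine interpolation and geometric summation.
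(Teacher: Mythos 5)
Your proposal is correct and follows essentially the same route as the paper: decompose by scale, pair a trivial $L^\infty$ bound with an exponentially decaying $L^2$ bound coming from the oscillatory kernel estimate \eqref{e:same}, interpolate, and sum the geometric series to produce the linear-in-$q$ constant. You simply spell out the $L^2$ step (via a $T_k^*T_k$ and Schur-test argument using the fiberwise bound \eqref{e:Z}) that the paper states in one line.
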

 
\begin{proof}
Observe that for fixed scales, we have 
\begin{align*}
\lVert T _{\mathcal I (k)} \;:\; L ^{\infty } \mapsto L ^{\infty }\rVert &\lesssim 1 , 
\\
\lVert T _{\mathcal I (k)} \;:\; L ^{2 } \mapsto L ^{2 }\rVert &\lesssim  2 ^{- \eta k}, \qquad k\geq 0. 
\end{align*}
The first estimate is trivial, and the second is a consequence of the oscillatory estimate \eqref{e:same}.  
Interpolating these estimates, and adding up gives the proof. 
\end{proof}

\bibliographystyle{alpha,amsplain}	
\begin{bibdiv}
\begin{biblist}

\bib{160506401}{article}{
   author={Benea, Cristina},
   author={Bernicot, Fr\'ed\'eric},
   author={Luque, Teresa},
   title={Sparse bilinear forms for Bochner Riesz multipliers and
   applications},
   journal={Trans. London Math. Soc.},
   volume={4},
   date={2017},
   number={1},
   pages={110--128},
   issn={2052-4986},
   review={\MR{3653057}},
   doi={10.1112/tlm3.12005},
}

\bib{MR883667}{article}{
      author={Chanillo, Sagun},
      author={Christ, Michael},
       title={Weak {$(1,1)$} bounds for oscillatory singular integrals},
        date={1987},
        ISSN={0012-7094},
     journal={Duke Math. J.},
      volume={55},
      number={1},
       pages={141\ndash 155},
  url={http://dx.doi.org.prx.library.gatech.edu/10.1215/S0012-7094-87-05508-6},
      review={\MR{883667}},
}

\bib{MR951506}{article}{
      author={Christ, Michael},
       title={Weak type {$(1,1)$} bounds for rough operators},
        date={1988},
        ISSN={0003-486X},
     journal={Ann. of Math. (2)},
      volume={128},
      number={1},
       pages={19\ndash 42},
         url={http://dx.doi.org.prx.library.gatech.edu/10.2307/1971461},
      review={\MR{951506}},
}

\bib{MR943929}{article}{
      author={Christ, Michael},
      author={Rubio~de Francia, Jos{\'e}~Luis},
       title={Weak type {$(1,1)$} bounds for rough operators. {II}},
        date={1988},
        ISSN={0020-9910},
     journal={Invent. Math.},
      volume={93},
      number={1},
       pages={225\ndash 237},
         url={http://dx.doi.org.prx.library.gatech.edu/10.1007/BF01393693},
      review={\MR{943929}},
}

\bib{2016arXiv161209201C}{article}{
   author={Conde-Alonso, Jos\'e M.},
   author={Culiuc, Amalia},
   author={Di Plinio, Francesco},
   author={Ou, Yumeng},
   title={A sparse domination principle for rough singular integrals},
   journal={Anal. PDE},
   volume={10},
   date={2017},
   number={5},
   pages={1255--1284},
   issn={2157-5045},
   review={\MR{3668591}},
   doi={10.2140/apde.2017.10.1255},
}

\bib{MR3521084}{article}{
      author={Conde-Alonso, Jos{\'e}~M.},
      author={Rey, Guillermo},
       title={A pointwise estimate for positive dyadic shifts and some
  applications},
        date={2016},
        ISSN={0025-5831},
     journal={Math. Ann.},
      volume={365},
      number={3-4},
       pages={1111\ndash 1135},
  url={http://dx.doi.org.prx.library.gatech.edu/10.1007/s00208-015-1320-y},
      review={\MR{3521084}},
}

\bib{160305317}{article}{
      author={{Culiuc}, A.},
      author={{Di Plinio}, F.},
      author={{Ou}, Y.},
       title={{Domination of multilinear singular integrals by positive sparse
  forms}},
        date={2016-03},
     journal={ArXiv e-prints},
      eprint={1603.05317},
}

\bib{2016arXiv161208881C}{article}{
      author={{Culiuc}, A.},
      author={{Kesler}, R.},
      author={{Lacey}, M.~T.},
       title={{Sparse Bounds for the Discrete Cubic Hilbert Transform}},
        date={2016-12},
     journal={ArXiv e-prints},
      eprint={1612.08881},
}

\bib{MR2403711}{article}{
      author={Demeter, Ciprian},
      author={Tao, Terence},
      author={Thiele, Christoph},
       title={Maximal multilinear operators},
        date={2008},
        ISSN={0002-9947},
     journal={Trans. Amer. Math. Soc.},
      volume={360},
      number={9},
       pages={4989\ndash 5042},
  url={http://dx.doi.org.prx.library.gatech.edu/10.1090/S0002-9947-08-04474-7},
      review={\MR{2403711}},
}

\bib{161203028}{article}{
      author={{Di Plinio}, F.},
      author={{Do}, Y.~Q.},
      author={{Uraltsev}, G.~N.},
       title={{Positive sparse domination of variational Carleson operators}},
        date={2016-12},
     journal={ArXiv e-prints},
      eprint={1612.03028},
}

\bib{MR3291794}{article}{
      author={Di~Plinio, Francesco},
      author={Lerner, Andrei~K.},
       title={On weighted norm inequalities for the {C}arleson and
  {W}alsh-{C}arleson operator},
        date={2014},
        ISSN={0024-6107},
     journal={J. Lond. Math. Soc. (2)},
      volume={90},
      number={3},
       pages={654\ndash 674},
         url={http://dx.doi.org.prx.library.gatech.edu/10.1112/jlms/jdu049},
      review={\MR{3291794}},
}

\bib{MR2900003}{article}{
      author={Ding, Yong},
      author={Liu, Honghai},
       title={Uniform weighted estimates for oscillatory singular integrals},
        date={2012},
        ISSN={0933-7741},
     journal={Forum Math.},
      volume={24},
      number={2},
       pages={223\ndash 238},
         url={http://dx.doi.org.prx.library.gatech.edu/10.1515/form.2011.057},
      review={\MR{2900003}},
}

\bib{161103808}{article}{
      author={{Karagulyan}, Grigor},
       title={{An abstract theory of singular operators}},
        date={2016-11},
     journal={ArXiv e-prints},
      eprint={1611.03808},
}

\bib{160901564}{article}{
   author={Krause, Ben},
   author={Lacey, Michael T.},
   title={Sparse bounds for maximal monomial oscillatory Hilbert transforms},
   journal={Studia Math.},
   volume={242},
   date={2018},
   number={3},
   pages={217--229},
   issn={0039-3223},
   review={\MR{3794333}},
   doi={10.4064/sm8699-7-2017},
}

\bib{160908701}{article}{
      author={{Krause}, Ben},
      author={Lacey, Michael~T.},
       title={{Sparse Bounds for Random Discrete Carleson Theorems}},
        date={2016-09},
     journal={ArXiv e-prints},
      eprint={1609.08701},
}

\bib{150105818}{article}{
   author={Lacey, Michael T.},
   title={An elementary proof of the $A_2$ bound},
   journal={Israel J. Math.},
   volume={217},
   date={2017},
   number={1},
   pages={181--195},
   issn={0021-2172},
   review={\MR{3625108}},
   doi={10.1007/s11856-017-1442-x},
}

\bib{160906364}{article}{
   author={Lacey, Michael T.},
   author={Spencer, Scott},
   title={Sparse bounds for oscillatory and random singular integrals},
   journal={New York J. Math.},
   volume={23},
   date={2017},
   pages={119--131},
   issn={1076-9803},
   review={\MR{3611077}},
}

\bib{MR2545246}{article}{
      author={Lie, Victor},
       title={The (weak-{$L\sp 2$}) boundedness of the quadratic {C}arleson
  operator},
        date={2009},
        ISSN={1016-443X},
     journal={Geom. Funct. Anal.},
      volume={19},
      number={2},
       pages={457\ndash 497},
  url={http://dx.doi.org.prx.library.gatech.edu/10.1007/s00039-009-0010-x},
      review={\MR{2545246}},
}

\bib{11054504}{article}{
      author={{Lie}, Victor},
       title={{The Polynomial Carleson Operator}},
        date={2011-05},
     journal={ArXiv e-prints},
      eprint={1105.4504},
}

\bib{MR822187}{article}{
      author={Ricci, F.},
      author={Stein, E.~M.},
       title={Oscillatory singular integrals and harmonic analysis on nilpotent
  groups},
        date={1986},
        ISSN={0027-8424},
     journal={Proc. Nat. Acad. Sci. U.S.A.},
      volume={83},
      number={1},
       pages={1\ndash 3},
         url={http://dx.doi.org.prx.library.gatech.edu/10.1073/pnas.83.1.1},
      review={\MR{822187}},
}

\bib{MR890662}{article}{
      author={Ricci, Fulvio},
      author={Stein, E.~M.},
       title={Harmonic analysis on nilpotent groups and singular integrals.
  {I}. {O}scillatory integrals},
        date={1987},
        ISSN={0022-1236},
     journal={J. Funct. Anal.},
      volume={73},
      number={1},
       pages={179\ndash 194},
  url={http://dx.doi.org.prx.library.gatech.edu/10.1016/0022-1236(87)90064-4},
      review={\MR{890662}},
}

\bib{MR1782909}{article}{
      author={Sato, Shuichi},
       title={Weighted weak type {$(1,1)$} estimates for oscillatory singular
  integrals},
        date={2000},
        ISSN={0039-3223},
     journal={Studia Math.},
      volume={141},
      number={1},
       pages={1\ndash 24},
      review={\MR{1782909}},
}

\bib{MR1317232}{article}{
      author={Seeger, Andreas},
       title={Singular integral operators with rough convolution kernels},
        date={1996},
        ISSN={0894-0347},
     journal={J. Amer. Math. Soc.},
      volume={9},
      number={1},
       pages={95\ndash 105},
  url={http://dx.doi.org.prx.library.gatech.edu/10.1090/S0894-0347-96-00185-3},
      review={\MR{1317232}},
}

\bib{MR1232192}{book}{
      author={Stein, Elias~M.},
       title={Harmonic analysis: real-variable methods, orthogonality, and
  oscillatory integrals},
      series={Princeton Mathematical Series},
   publisher={Princeton University Press, Princeton, NJ},
        date={1993},
      volume={43},
        ISBN={0-691-03216-5},
        note={With the assistance of Timothy S. Murphy, Monographs in Harmonic
  Analysis, III},
      review={\MR{1232192}},
}  

\bib{MR1879821}{article}{
      author={Stein, Elias~M.},
      author={Wainger, Stephen},
       title={Oscillatory integrals related to {C}arleson's theorem},
        date={2001},
        ISSN={1073-2780},
     journal={Math. Res. Lett.},
      volume={8},
      number={5-6},
       pages={789\ndash 800},
  url={http://dx.doi.org.prx.library.gatech.edu/10.4310/MRL.2001.v8.n6.a9},
      review={\MR{1879821}},
}

\end{biblist}
\end{bibdiv}

\end{document}